\documentclass{article}
\usepackage[a4paper, total={7in, 8in}]{geometry}
\usepackage{enumerate}
\usepackage{amsmath}
\usepackage{amssymb,latexsym}
\usepackage{amsthm}
\usepackage{color}
\usepackage{cancel}
\usepackage{graphicx}
\usepackage{cite}
\usepackage{longtable}
\usepackage{amscd}
\usepackage{lineno}

\makeatletter
\usepackage{amssymb,amsmath,amsthm,latexsym}
\usepackage[mathscr]{eucal} \usepackage{mathrsfs}
\usepackage{color}
\usepackage{hyperref}
\usepackage{comment}
\let\wfs@comment@comment\comment
\let\comment\@undefined

\usepackage{changes}
\let\wfs@changes@comment\comment
\let\comment\@undefined

\newcommand\comment{%
    \ifthenelse{\equal{\@currenvir}{comment}}
    {\wfs@comment@comment}
    {\wfs@changes@comment}%
}
\definechangesauthor[name=Daniele, color=red]{DAN}
\definechangesauthor[name=Giuseppe, color=green]{GIUS}

\makeatother


\def\cC{\mathcal C}

\def\PG{{\rm PG}}

\def\Tr{{\rm Tr}}

\def\F{\mathbb F}

\def\ps@headings{
 \def\@oddhead{\footnotesize\rm\hfill\runningheadodd\hfill\thepage}
 \def\@evenhead{\footnotesize\rm\thepage\hfill\runningheadeven\hfill}
 \def\@oddfoot{}
 \def\@evenfoot{\@oddfoot}
}

\def\Fq3{{\mathbb F}_{q^3}}

\usepackage{comment}

\usepackage{changes}
\begin{document}


\newtheorem{thm}{Theorem}[section]
\newtheorem{lem}[thm]{Lemma}
\newtheorem{prop}[thm]{Proposition}
\newtheorem{cor}[thm]{Corollary}
\newtheorem{Property}[thm]{Property}
\newtheorem*{Main}{Main Theorem}
\newtheorem{Notation}[thm]{Notation}
\newtheorem{conj}[thm]{Conjecture}
\newtheorem{crit}[thm]{Criterion}
\newtheorem{oss}[thm]{Observation}
\newtheorem{question}[thm]{Question}

\newtheorem{definition}[thm]{Definition}
\newtheorem{rem}[thm]{Remark}
\newtheorem{ex}[thm]{Example}

\newtheorem*{main}{Main Theorem}

\newcommand{\fa}[1]{{\color{cyan}{#1}}}

\def\q{{^{q}}}
\def\qq{^{q^2}}
\def\qqq{{^{q^3}}}
\def\qqqq{{^{q^4}}}
\def\qqqqq{{^{q^5}}}
\def\F{\mathbb{F}}

\newcommand{\fax}[2]{{#1^{q^2} +#2^{q}}}
\newcommand{\fbx}[2]{{#1^{q} +#2^{q^3}}}
\title{A new family of $2$-scattered subspaces and related MRD codes}

\author{Daniele Bartoli\thanks{Dipartimento di Matematica e Informatica, Universit\`a degli Studi di Perugia,  Perugia, Italy. daniele.bartoli@unipg.it},  Francesco Ghiandoni\thanks{Dipartimento di Matematica e Informatica ``Ulisse Dini", Universit\`a  degli studi di Firenze, Firenze, Italy, francesco.ghiandoni@unifi.it},
Alessandro Giannoni\thanks{Dipartimento di Matematica e Applicazioni  ``R. Caccioppoli'', Università di Napoli Federico II, Napoli, Italy, alessandro.giannoni@unina.it},
Giuseppe Marino\thanks{Dipartimento di Matematica e Applicazioni  ``R. Caccioppoli'', Università di Napoli Federico II, Napoli, Italy, giuseppe.marino@unina.it}}

\date{}

\maketitle
\begin{abstract}
Scattered subspaces and $h$-scattered subspaces have been extensively studied in recent decades for both theoretical purposes and their connections to various applications. While numerous constructions of scattered subspaces exist, relatively few are known about $h$-scattered subspaces with $h\geq2$. In this paper, we establish the existence of maximum $2$-scattered $\F_q$-subspaces in $V(r,q^6)$ whenever $r\geq 3$, $r\ne 5$, and $q$ is an odd power of $2$. Additionally, we explore the corresponding MRD codes.
\end{abstract}
\section{Introduction}

Linear subspaces have been extensively investigated in recent decades due to their applications in various areas of mathematics, such as finite geometries (blocking sets, two-intersection sets, complete arcs, and caps in affine and projective spaces over finite fields, as well as finite semifields) and coding theory (two weight codes, MRD codes). Among these, scattered subspaces are the most thoroughly studied.

Let us recall some basic definitions on linear sets first. Let $q$ be a prime power and $r,n\in \mathbb{N}$. Let $V$ be a vector space of dimension $r$ over $\mathbb{F}_{q^n}$. For any $k$-dimensional $\mathbb{F}_q$-vector subspace $U$ of $V$, the set $L(U)$ defined by the non-zero vectors of $U$ is called an $\mathbb{F}_q$-\emph{linear set} of $\Lambda=\mathrm{PG}(V, q^n)$ of \emph{rank} $k$, i.e.
\[ L(U)=\{\langle {\mathbf u} \rangle_{\mathbb{F}_{q^n}}: {\mathbf u} \in U\setminus \{\mathbf{0} \}  \}.\]
It is notable that the same linear set can be defined by different vector subspaces. Consequently, we always consider a linear set and the $\mathbb{F}_q$-vector subspace defining it simultaneously. 

Let $\Omega=\mathrm{PG}(W,\mathbb{F}_{q^n})$ be a subspace of $\Lambda$ and let $L(U)$ be an $\mathbb{F}_q$-linear set of $\Lambda$. We say that $\Omega$ has \emph{weight} $i$ in $L(U)$ if $\dim_{\mathbb{F}_q}(W\cap U)=i$. Thus a point of $\Lambda$ belongs to $L(U)$ if and only if it has weight at least $1$. Moreover, for any $\mathbb{F}_q$-linear set $L(U)$ of rank $k$, 
\[|L(U)|\leq \frac{q^{k}-1}{q-1}.\]
When the equality holds, i.e.\ all the points of $L(U)$ have weight $1$, we call $L(U)$ a \emph{scattered} linear set and $U$ a \emph{scattered} $\F_q$-subpace of $V$. A scattered $\mathbb{F}_q$-linear set $L(U)$ of highest possible rank is called a \emph{maximum scattered} $\mathbb{F}_q$-\emph{linear set} (and $U$ is said to be a \emph{maximum scattered} $\F_q$-subspace). 

Recently, scattered and maximum scattered subspaces have been investigated and constructed via suitable polynomial descriptions. This approach started in  \cite{sheekey_new_2016}.  For every $n$-dimensional $\mathbb{F}_q$-subspace $U$ of $\mathbb{F}_{q^n}\times \mathbb{F}_{q^n}$ there exist a suitable basis of $\mathbb{F}_{q^n}\times \mathbb{F}_{q^n}$ and an $\mathbb{F}_q$-linearized polynomial  $f(x)=\sum A_i x^{q^i} \in \mathbb{F}_{q^n}[x]$ of degree less than $q^n$ such that $U= \{ (x, f(x)) : x\in \mathbb{F}_{q^n}  \}.$ Following this description, maximum scattered linear sets in $\mathrm{PG}(1,q^n)$ can be described via the so-called scattered polynomials; \cite{sheekey_new_2016}. Generalizations of these connections led to the index description of scattered polynomials \cite{BZ2018} and scattered sequences \cite{articolosequenze, articolosequenze-new}.

In \cite{CsMPZ2019} the following special class of scattered subspaces has been introduced.

\begin{definition}
	Let $V$ be an $r$-dimensional $\F_{q^n}$-vector space.
	An $\F_q$-subspace $U$ of $V$ is called $h$-scattered, $0<h \leq r-1$, if $\langle U \rangle_{\F_{q^n}}=V$ and each $h$-dimensional $\F_{q^n}$-subspace of $V$ meets $U$ in an $\F_q$-subspace of dimension at most $h$. An $h$-scattered subspace of highest possible dimension is called a maximum $h$-scattered subspace.
\end{definition}

With this definition, the $1$-scattered subspaces are the scattered subspaces generating $V$ over $\F_{q^n}$. When $h=r$, the described definition holds true for $n$-dimensional $\F_q$-subspaces of $V$, which define subgeometries of $\PG(V,\F_{q^n})$.
If $h=r-1$ and $\dim_{\F_q} U=n$, then $U$ defines a scattered $\F_q$-linear set with respect to hyperplanes, introduced in \cite[Definition 14]{ShVdV}.
A further generalisation of the concept of $h$-scattered subspaces can be found in the recent paper \cite{BaCsMT2020}.

In \cite[Theorem 2.3]{CsMPZ2019} it has been proved that for an $h$-scattered subspace $U$ of $V(r,q^n)$, if $U$ does not define a subgeometry, then
\begin{equation}\label{hscatbound}
\dim_{\F_q} U \leq \frac{rn}{h+1}.
\end{equation}
The $h$-scattered subspaces whose dimension reaches Bound \eqref{hscatbound} will be called \emph{maximum} and they exist whenever $h+1 \mid r$; see \cite[Theorem 2.5]{CsMPZ2019}.
Additionally, in \cite[Theorem 2.8]{CsMPZ2019}, it was demonstrated by the authors that $h$-scattered subspaces of dimension $rn/(h+1)$ intersect hyperplanes of $V(r,q^n)$ in $\F_q$-subspaces of dimension ranging from at least $rn/(h+1)-n$ to at most $rn/(h+1)-n+h$. They also introduced a duality relation, referred to as \emph{Delsarte duality}, between maximum $h$-scattered subspaces of $V(r,q^n)$ reaching Bound \eqref{hscatbound} and maximum $(n-h-2)$-scattered subspaces of $V(rn/(h+1)-r,q^n)$ reaching Bound \eqref{hscatbound}. This enabled constructions even when $h+1$ is not a divisor of $r$. Specifically, the authors proved the existence of maximum $(n-3)$-scattered $\F_q$-subspaces of $V(r(n-2)/2,q^n)$ when $n\geq 4$ is even and $r\geq 3$ is odd (cf. \cite[Theorem 3.6]{CsMPZ2019}).

In \cite[Corollary 4.4]{ShVdV} the $(r-1)$-scattered subspaces of $V(r,q^n)$ attaining bound \eqref{hscatbound}, i.e. of dimension $n$, have been shown to be equivalent to MRD-codes of $\F_{q}^{n\times n}$ with minimum rank distance $n-r+1$ and with left or right idealiser isomorphic to $\F_{q^n}$. In \cite{zini2021scattered} a connection between maximum $h$-scattered subspaces and MRD codes has been established.

The main open problem about maximum $h$-scattered in $V(r,q^n)$ is their existence for every admissible values of $r$, $n$, and $h\geq 2$. It is now known that when $2 \mid rn$  there always exist scattered subspaces of maximum dimension \cite{BBL2000, BGMP2015, BL2000, CSMPZ2016}.

Consider the scenario where $h=2$. If $r\equiv 0\pmod 3$, maximum $2$-scattered  subspaces exist for any integer $n$; see \cite[Theorem 2.5]{CsMPZ2019}. For the case when $r=4$ and $n=3$, let $W$ be a maximum $2$-scattered subspace of a hyperplane $H$ in $V(4,q^3)$ (i.e., $W$ is scattered with respect to hyperplanes). Then $W$ has dimension $3$, and let ${\bf u}$ be a vector in $V(4,q^n)$ such that ${\bf u}\notin H$. Consequently, $U:=W\oplus \langle{\bf u}\rangle_{\F_q}$ is a maximum $2$-scattered subspace of $V(4,q^3)$. Considering Theorems 2.5 and 3.6 of \cite{CsMPZ2019}, the first unresolved case pertains to the existence of $2$-scattered $\F_q$-subspaces of $V(4,q^6)$ with dimension $8$.

In this paper we show that they exists, whenever $q=2^h$, with $h\geq 1$ odd.

More precisely, denoted by $\Tr_{{q^n}/q}$ the trace function of $\F_{q^n}$ over
$\F_q$ we prove the following result.

\begin{main}\label{main-thm}
Let $\sigma:x\in\F_{q^6}\mapsto x^{q^{s}}\in\F_{q^6}$ be a field automorphism of $\F_{q^6}$ with $1\leq s\leq 5$ and $\gcd(s,6)=1$. Then the $\F_q$-subspace of $\F_{q^6}^4$
\begin{equation}\label{formU}
U_s:=\{(x,y,x^{\sigma^2}+y^\sigma,x^\sigma+y^{\sigma^3}):x,y\in\F_{q^6}, \Tr_{q^6/q^2}(x)=\Tr_{q^6/q^2}(y)=0\}
\end{equation}
is a maximum $2$-scattered subspace.
\end{main}
As a result, since maximum $2$-scattered $\F_q$-subspaces of dimension $3$ and $4$ exist in $\F_{q^6}$-spaces, as proven in \cite[Theorem 2.5]{CsMPZ2019}, it follows that maximum $2$-scattered $\F_q$-subspaces also exist in $V(r,q^6)$, where $r\geq 3$ and $r\ne 5$, with $q=2^h$ and $h\geq 1$ being odd.

Furthermore, in the last section, we determine the parameters of the associated MRD codes.


\section{Preliminary results}
Since $1\leq s\leq 5$ and $\gcd(s,6)=1$, we have $s\in\{1,5\}$. It can be easily seen that $U_5$ is equivalent to 
\[U'_5=\{(x,y,x^{q^2}+y^q+y^{q^3},x^q+x^{q^3}+y^{q^{3}})\colon \textnormal{ Tr}_{q^6\mid q^2}(x)=\textnormal{ Tr}_{q^6\mid q^2}(y)=0\}\]
and that $U_{1}$ and $U'_5$ are $\mathrm{GL}(4,q^6)$-equivalent. 
Indeed $M\cdot U_5'^T =U_1^T$, where $T$ denotes the transpose and 
\begin{equation*}
        M:=\begin{pmatrix}
            1&0&1&1\\
0&0&1&0\\
1&1&0&1\\
1&0&0&0
        \end{pmatrix}.
    \end{equation*}
From now on, we will denote $U:=U_1$.

\begin{thm}
The $\mathbb{F}_q$-subspace    $U$ is scattered in $V(4,q^6)$.\end{thm}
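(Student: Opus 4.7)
To prove $U$ is scattered, I would fix a nonzero element $\mathbf{u}=(x,y,x^{q^2}+y^q,x^q+y^{q^3})\in U$ and show that the stabiliser $\{\lambda\in\F_{q^6}:\lambda\mathbf{u}\in U\}$ equals $\F_q$. Equivalently, given $x,y\in\F_{q^6}$ with $\Tr_{q^6/q^2}(x)=\Tr_{q^6/q^2}(y)=0$ and given $\lambda\in\F_{q^6}\setminus\F_q$ with $\lambda x,\lambda y$ also in the kernel of the trace and
\begin{equation*}
(\lambda x)^{q^2}+(\lambda y)^q=\lambda(x^{q^2}+y^q),\qquad (\lambda x)^q+(\lambda y)^{q^3}=\lambda(x^q+y^{q^3}),
\end{equation*}
I would deduce $(x,y)=(0,0)$. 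Rewriting in characteristic $2$ and setting $a=\lambda^q+\lambda$, $b=\lambda^{q^2}+\lambda$, $c=\lambda^{q^3}+\lambda$, these become the compact system
\begin{equation*}
b\,x^{q^2}+a\,y^q=0,\qquad a\,x^q+c\,y^{q^3}=0. \tag{$\star$}
\end{equation*}

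The argument then splits on where $\lambda$ lies. If $\lambda\in\F_{q^2}\setminus\F_q$, then $b=0$ but $a\ne 0$, and $(\star)$ gives $y=0$ and then $x=0$. Symmetrically, if $\lambda\in\F_{q^3}\setminus\F_q$, then $c=0$ but $a\ne 0$, so $(\star)$ forces $x=0$ and then $y=0$. In either subcase the conclusion is immediate; these are the routine cases.

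The substantive case is $\lambda\in\F_{q^6}\setminus(\F_{q^2}\cup\F_{q^3})$, where $a,b,c$ are all nonzero. Here the first equation of $(\star)$ already determines $y$ as a function of $x$: extracting a $q$-th root gives $y=\beta\,x^q$ with $\beta:=(b/a)^{q^{-1}}=b^{q^5}/a^{q^5}\in\F_{q^6}^{*}$. Plugging this into the second equation of $(\star)$ and applying the trace identity $x^{q^4}=x+x^{q^2}$ coming from $\Tr_{q^6/q^2}(x)=0$ produces the $\F_q$-linearised relation
\begin{equation*}
c\,\beta^{q^3}\,x+a\,x^q+c\,\beta^{q^3}\,x^{q^2}=0. \tag{$\ast$}
\end{equation*}
Similarly, imposing $\Tr_{q^6/q^2}(\lambda y)=\Tr_{q^6/q^2}(\lambda\beta x^q)=0$, raising to a suitable power of $q$ and again using the trace condition on $x$, one obtains a second $\F_{q^2}$-linearised identity of shape $A(\lambda)x+B(\lambda)x^{q^2}=0$ with coefficients explicit rational functions of $\lambda$.

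Finally, I would eliminate $x$ between these two identities (equivalently, raise $(\ast)$ to $q^2$ and use $x^{q^4}=x+x^{q^2}$ to pick up the collapsed relation $a^{q^2}x^{q^3}=c^{q^2}\beta^{q^5}x$, so $x^{q^3-1}$ is pinned to an explicit rational function of $\lambda$) to reduce the whole system to a single polynomial condition purely on $\lambda$. The expected conclusion is that, under the hypothesis $q=2^h$ with $h$ odd, this polynomial has no root in $\F_{q^6}\setminus(\F_{q^2}\cup\F_{q^3})$. This final algebraic step, which is where the arithmetic assumption on $q$ must enter (most likely through $\gcd(3,q-1)=1$ while $3\mid q+1$, so that certain cube-root equations on norms are trivially solved), is the main obstacle; the rest of the proof is the bookkeeping outlined above.
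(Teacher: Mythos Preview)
Your setup is correct and the initial reductions are sound: the system $(\star)$, the easy subcases $\lambda\in\F_{q^2}\setminus\F_q$ and $\lambda\in\F_{q^3}\setminus\F_q$, the substitution $y=\beta x^q$, and the derivation of $(\ast)$ all check out. But the proof is explicitly unfinished: you yourself flag the final elimination as ``the main obstacle'' and do not carry it out. That step is the entire content of the theorem. The polynomial condition on $\lambda$ you are aiming for will involve all six Frobenius conjugates $\lambda,\lambda^q,\dots,\lambda^{q^5}$, and you give no indication of how to show it has no roots in $\F_{q^6}\setminus(\F_{q^2}\cup\F_{q^3})$. You are also a bit casual about which trace constraints you are invoking: besides $\Tr_{q^6/q^2}(x)=0$ and $\Tr_{q^6/q^2}(\lambda y)=0$, the conditions $\Tr_{q^6/q^2}(y)=0$ (not automatic once you set $y=\beta x^q$) and $\Tr_{q^6/q^2}(\lambda x)=0$ are also available and will be needed.

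The paper takes a different and cleaner route at exactly the point where you get stuck. Instead of solving the first equation of $(\star)$ for $y$, it observes that $(\star)$ together with the two trace conditions $\Tr_{q^6/q^2}(\lambda x)=\Tr_{q^6/q^2}(\lambda y)=0$ constitute a homogeneous \emph{linear} system in the four quantities $\lambda+\lambda^q,\ \lambda+\lambda^{q^2},\ \lambda+\lambda^{q^3},\ \lambda+\lambda^{q^4}$, with coefficient matrix built from $x^{q^i},y^{q^j}$. Since $\lambda\notin\F_q$ forces this system to have a nonzero solution, its determinant must vanish, and that determinant factors to give $x^{q^2}y^{q^4}+x^{q^4}y^{q^2}=0$, i.e.\ $x=\mu y$ with $\mu\in\F_{q^2}^{*}$. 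This collapses the problem from six conjugates of $\lambda$ down to two conjugates of a parameter $\mu\in\F_{q^2}$; substituting back and eliminating produces explicit bivariate polynomials in $\mu,\mu^q$, which are then killed by short resultant computations and a small case analysis. The arithmetic hypothesis on $q$ (namely $q\equiv 2\pmod 3$, so $\F_4\cap\F_q=\F_2$ and $\F_{16}\cap\F_{q^2}=\F_4$) enters precisely in this endgame. This determinant trick is the missing idea in your outline; without it, the algebra in your approach is likely to be substantially heavier.
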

    \begin{proof}
    Let $T:=\{x\in\F_{q^6}\colon \textnormal{ Tr}_{q^6\mid q^2}(x)=0\}$
 and let $\lambda\in\mathbb{F}_{q^6}\setminus\mathbb{F}_q$ be such that
\begin{equation*}
(x,y,\fax{x}{y},\fbx{x}{y})= \lambda(u,v,\fax{u}{v},\fbx{u}{v}),
\end{equation*}
with $x,y,u,v\in T$. Then, $U$ is scattered if and only if the above equation holds only for $u=v=0$.

By way of contradiction, we assume $(u,v)\neq (0,0)$. We have
\begin{equation} \label{eq sistema U scattered}
\begin{cases}
    x=\lambda u\\
    y=\lambda v\\
    \fax{x}{y}=\lambda(\fax{u}{v})\\
    \fbx{x}{y}=\lambda(\fbx{u}{w})\\
    \textnormal{ Tr}_{q^6\mid q^2}(x)=\textnormal{ Tr}_{q^6\mid q^2}(y)=\textnormal{ Tr}_{q^6\mid q^2}(u)=\textnormal{ Tr}_{q^6\mid q^2}(v)=0,
\end{cases}\end{equation}

so
$$\begin{cases}
    (\lambda+\lambda\qq)u\qq+(\lambda+\lambda\q)v\q=0\\
    (\lambda+\lambda\q)u\q+(\lambda+\lambda\qqq)v\qqq=0\\
    (\lambda+\lambda\qq)u\qq+(\lambda+\lambda\qqqq)u\qqqq=0\\
    (\lambda+\lambda\qq)v\qq+(\lambda+\lambda\qqqq)v\qqqq=0.
\end{cases}$$

It is easy to see that a nontrivial solution $(u,v)$ must satisfy  $uv\neq 0$.
We note that this is a linear system in the unknowns $(\lambda+\lambda\q),(\lambda+\lambda\qq),(\lambda+\lambda\qqq),(\lambda+\lambda\qqqq)$ and, since $(\lambda+\lambda\q)\neq 0$, this is a linear system of 4 equations in 4 unknowns which has a nonzero solution. This is possible if and only if
$\textnormal{det}(M)=0$, where
$$M=\begin{pmatrix}
    v\q&u\qq&0&0\\
    u\q&0&v\qqq&0\\
    0&u\qq&0&u\qqqq\\
    0&v\qq&0&v\qqqq
\end{pmatrix}.$$

We have $\textnormal{det}(M)=-v^{q^3+q}(u\qq v\qqqq+u\qqqq v\qq)$, and it vanishes if and only if $uv\qq+u\qq v=0$, i.e $u=\mu v$ with $\mu\in\F^*_{q^2}.$\\ By replacing $x=\lambda u,$ $y=\lambda v,$ and $u=\mu v$ in the third and fourth equations of System \ref{eq sistema U scattered}, one gets
\begin{equation} 
\begin{cases}
   G(v,v^q,v\qq,v\qqq,\lambda, \lambda\q, \lambda\qq, \lambda\qqq, \lambda\qqqq, \lambda\qqqqq,\mu, \mu\q):= v\q\lambda + v\q\lambda\q + v\qq\lambda \mu + v\qq\lambda\qq\mu=0\\
   H(v,v^q,v\qq,v\qqq,\lambda, \lambda\q, \lambda\qq, \lambda\qqq, \lambda\qqqq, \lambda\qqqqq,\mu, \mu\q) := v\q \lambda \mu\q + v\q \lambda\q \mu\q + v\qqq \lambda + v\qqq \lambda\qqq =0. 
\end{cases}\end{equation}
From $G(v,\lambda,\mu)=0$ we obtain $\lambda\qq=\frac{v\q\lambda + v\q\lambda\q + v\qq\lambda\mu}{v\qq\mu}$ and $\lambda\qqq=\frac{(v\q+v\qq\mu)\lambda + (v\q+v\qq\mu+v\qqq\mu^{1+q})\lambda\q}{v\qqq \mu^{1+q}}$ and likewise $\lambda\qqqq=\Sigma(v,\lambda,\lambda\q,\mu),$ $\lambda\qqqqq=\Theta(v,\lambda,\lambda\q,\mu).$ By replacing such expression of $\lambda\qqq$ in $H(v,\lambda,\mu)=0,$ we have
\begin{equation}
    \frac{\lambda\q+\lambda}{\mu^{1+q}} (v\mu^{2+q} + v + v\q\mu\q + v\qq\mu^{1+q})^q=0.
\end{equation}
So $v\qq=\frac{v\mu^{2+q} + v + v\q\mu\q}{\mu^{1+q}}$ and $v\qqq=\frac{v\mu^{2+q} + v + v\q\mu^{1+3q}}{\mu^{1+2q}}.$ Observe that $\mu=1$ implies $v\in \F_q \subseteq \F_{q^2},$ i.e $v=0$ since $\textnormal{ Tr}_{q^6\mid q^2}(v)=0.$ Thus we can assume $\mu \neq 1.$ After substituting the expressions of $v\qq,v\qqq,\lambda, \lambda\q, \lambda\qq, \lambda\qqq, \lambda\qqqq, \lambda\qqqqq$ in $H^q(v,\lambda,\mu)=0,$ we obtain
\begin{equation}
    \frac{v(\lambda+\lambda\q)}{\mu^{1+q}}(\mu^{2+q}+1)[\mu(\mu^{1+q}(\mu^2+\mu+1)+\mu+1)v+((\mu^{1+q}(\mu\q+\mu+1)+1)v\q]=0.
\end{equation}
\textbf{Case 1.} $\mu^{2+q}+1=0.$\\
From $\mu \in \F_{q^2},$ it follows that $\mu^t=1,$ where $t=\gcd(q^2-1,q+2)=\gcd(q-1,q+2)\in\{1,3\}$. Since $q \equiv -1\pmod 3$, we have $t=1,$ i.e $\mu=1,$ a contradiction. \\
\textbf{Case 2.} $P_\mu v+Q_\mu v\q=0,$ where $P_\mu:=\mu[\mu^{1+q}(\mu^2+\mu+1)+\mu+1]$ and $Q_\mu:=\mu^{1+q}(\mu\q+\mu+1)+1.$
\begin{itemize}
    \item $P_\mu \neq 0 = Q_\mu$ or $Q_\mu \neq 0 = P_\mu.$\\
    This means $v=0,$ a contradiction.
    \item $P_\mu = 0 = Q_\mu.$\\
    Since $\textnormal{Res}(P_\mu,Q_\mu,\mu^q)=\mu^3(\mu^4+\mu^3+1),$ we have $\mu \in \F_{16}\setminus \F_4,$ and a contradiction arises from $\F_{q^2}\cap \F_{16}=\F_4.$ 
\end{itemize}
Thus we can assume without restrictions $P_\mu \neq 0 \neq Q_\mu.$ \\
By replacing $v^q=\frac{P_\mu}{Q_\mu}v$ and the previous expressions of $v\qq,v\qqq,\lambda, \lambda\q, \lambda\qq, \lambda\qqq, \lambda\qqqq, \lambda\qqqqq$ in $H\qq(v,\lambda,\mu)=0$ and in $H\qqq(v,\lambda,\mu)=0,$ we obtain 
$$\begin{cases}
   [(\mu^{1+q})^2+\mu^{1+q}+1]\cdot R_\mu=0\\
   S_\mu=0, 
\end{cases}$$
where $R_\mu:=\mu^{4+4q} + \mu^{4+3q} + \mu^{4+2q} + \mu^{3+4q} + \mu^{2+4q} + \mu^{2+2q} + 1$ and $S_\mu:=\mu^{5+4q} + \mu^{4+2q} + \mu^{2+4q} + \mu^{1+2q} + 1.$\\
We distinguish the following cases.
\begin{itemize}
    \item $(\mu^{1+q})^2+\mu^{1+q}+1=0.$ \\
    It follows that $\mu^{1+q} \in \F_q \cap \F_4=\F_2,$ that is impossible.
    \item $R_\mu=0.$\\
   We observe that
   $$0=R_\mu+S_\mu=\mu^{q}(\mu^{1+q}+1)P_\mu,$$ so $\mu^{1+q}=1.$ Finally, by replacing $\mu^{1+q}=1$ in $S_\mu=0,$ we obtain $(\mu^q+\mu)^2+\mu^q+\mu+1=0,$ and a contradiction arises from $\mu^q+\mu \in \F_q \cap \F_4=\F_2.$
\end{itemize}
\end{proof}

\begin{lem}\label{lemma}
For every subspace $M$ of $\F^4_{q^6}$ fixed by $x\longrightarrow x\qq$ we have  that  ${\dim}_q( U\cap M)$ is even.
\end{lem}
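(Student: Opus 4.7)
The plan is to exploit the $\F_q$-linear map $\phi\colon (x_1,x_2,x_3,x_4)\in\F_{q^6}^4\mapsto(x_1^{q^2},x_2^{q^2},x_3^{q^2},x_4^{q^2})$, whose pointwise fixed set is $\F_{q^2}^4$ and whose order is $3$. First I would verify that $U$ is $\phi$-invariant: applying $\phi$ to a generic element $(x,y,x^{q^2}+y^q,x^q+y^{q^3})\in U$ gives $(x^{q^2},y^{q^2},x^{q^4}+y^{q^3},x^{q^3}+y^{q^5})$, which again has the defining form of $U$ with $(x,y)$ replaced by $(x^{q^2},y^{q^2})$, and the conditions $\Tr_{q^6/q^2}(x)=\Tr_{q^6/q^2}(y)=0$ are preserved because $\Tr_{q^6/q^2}$ commutes with $x\mapsto x^{q^2}$. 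Since $M$ is $\phi$-stable by hypothesis, the intersection $V:=U\cap M$ is a $\phi$-stable $\F_q$-subspace, and $\phi$ restricts to an $\F_q$-linear endomorphism of $V$ of order dividing $3$.

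Next I would decompose $V$ using this action. Since $q=2^h$ is even, $\gcd(3,q)=1$, so $t^3-1=(t-1)(t^2+t+1)$ is a separable factorisation over $\F_q$. Moreover, $h$ odd forces $q\equiv 2\pmod 3$, hence $3\nmid q-1$, so the primitive cube roots of unity do not lie in $\F_q$ and $t^2+t+1$ is irreducible over $\F_q$. The standard semisimple decomposition therefore yields
\[
V=V_0\oplus V_1,\qquad V_0=\ker(\phi-\mathrm{id}),\qquad V_1=\ker(\phi^2+\phi+\mathrm{id}),
\]
with $\phi$ understood as its restriction to $V$. The subspace $V_1$ inherits a natural module structure over $\F_q[t]/(t^2+t+1)\cong\F_{q^2}$, so $\dim_{\F_q}V_1$ is automatically even; the parity of $\dim_{\F_q}V$ is therefore controlled entirely by $\dim_{\F_q}V_0$.

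Finally I would show that $V_0=\{0\}$. Any element of $V_0$ is pointwise fixed by $\phi$, hence has all four coordinates in $\F_{q^2}$; in particular the first two entries $x,y$ lie in $\F_{q^2}$. The defining trace conditions on $U$ then read $\Tr_{q^6/q^2}(x)=x+x^{q^2}+x^{q^4}=3x=x=0$ (using $\mathrm{char}\,\F_q=2$), and similarly $y=0$, forcing the element to be $0$. Thus $V=V_1$ and $\dim_{\F_q}V$ is even, as required.

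The only step demanding an explicit computation is the verification that $U$ is $\phi$-invariant, a short matter of matching coordinates; the rest of the argument is the clean semisimple-decomposition argument available for a cyclic group action of order coprime to the characteristic, enabled here by the arithmetic condition $q\equiv 2\pmod 3$ implicit in the hypothesis $q=2^h$ with $h$ odd. I do not foresee a genuine obstacle.
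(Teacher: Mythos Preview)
Your proof is correct. Both your argument and the paper's rest on the same three observations: $U$ is $\phi$-invariant, the only $\phi$-fixed element of $U$ is $0$ (forced by the trace conditions together with $\mathrm{char}\,\F_q=2$), and the arithmetic constraint $q\equiv 2\pmod 3$. The difference is in packaging. The paper argues combinatorially: since no nonzero vector of $V=U\cap M$ is fixed, every $\langle\phi\rangle$-orbit on $V\setminus\{0\}$ has length $3$, so $3\mid q^i-1$, which forces $i$ even because $q\equiv 2\pmod 3$. You instead give a module-theoretic argument: the separable factorisation $t^3-1=(t-1)(t^2+t+1)$ with the quadratic factor irreducible over $\F_q$ yields $V=V_0\oplus V_1$, and $V_0=0$ reduces the claim to the fact that $V_1$ carries an $\F_{q^2}$-structure. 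Your route is a bit more structural and makes explicit why the parity conclusion falls out, while the paper's orbit count is shorter and entirely elementary; neither requires anything the other does not.
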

\begin{proof}
   Note that also $U$ is fixed by $x\longrightarrow x\qq.$
    Let $i={\dim}_q( U\cap M)$, so there are $q^i-1$ nonzero vectors in $U\cap M$. Fix $v=(x,y,x\qq+y\q,x\q+y\qqq)\in U\cap M$, $v \neq (0,0,0,0)$; since $U\cap M$ is fixed by $x\longrightarrow x\qq,$  it follows that $v\qq=(x\qq,y\qq,x\qqqq+y\qqq,x\qqq+y\qqqqq)\in U\cap M$. Since $x\neq 0$ or $y\neq 0$ and $\textnormal{ Tr}_{q^6\mid q^2}(x)=\textnormal{Tr}_{q^6\mid q^2}(y)=0$, we have $v\neq v\qq$. Analogously $v\qqqq\in U\cap M$ with $v\neq v\qqqq\neq v\qq$. Thus the orbit of every nonzero vector under the function $x\longrightarrow x\qq$ has size three, hence $3\mid q^i-1$, and that is equivalent to $i$ being even.
\end{proof} 
From now on, we will refer to $i={\dim}_q( U\cap M)$ as the weight of $M$ in $U$. 
\begin{cor}
    For every subspace $M$ of $\F^4_{q^6}$ of dimension $3$ fixed by $x\longrightarrow x\qq$ we have ${\dim}_q( U\cap M)\leq 4$.
\end{cor}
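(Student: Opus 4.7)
The plan is to combine the parity result from Lemma \ref{lemma} with a rank analysis of a natural $\F_q$-linear map associated with $M$. First I would observe that $U\subseteq T^4 := T\times T\times T\times T$, where $T:=\ker\Tr_{q^6/q^2}\subseteq\F_{q^6}$: for $u=(x,y,x^{q^2}+y^q,x^q+y^{q^3})\in U$ with $x,y\in T$, the identity $\Tr_{q^6/q^2}(x^{q^i})=\Tr_{q^6/q^2}(x)^{q^i}$ yields $u\in T^4$. Consequently $U\cap M = U\cap(M\cap T^4)$.

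Next I would classify the $\sigma$-stable hyperplanes. Writing $M=\ker(a\cdot)$ with $a\in\F_{q^6}^4\setminus\{0\}$, the stability condition $\sigma(M)=M$ forces $a^{q^2}=\mu a$ for some $\mu\in\F_{q^6}^*$ of norm $1$ over $\F_{q^2}$; Hilbert 90 then lets one rescale $a$ so that $a=c\in\F_{q^2}^4\setminus\{0\}$. Because $c_i\in\F_{q^2}$ and $T$ is $\F_{q^2}$-stable, the linear form $c\cdot\colon T^4\to T$ is surjective, so $M_2:=M\cap T^4$ has $\F_q$-dimension $12$. Proving the corollary is thus equivalent to showing that the $\F_q$-linear map
\[
g\colon U\longrightarrow T,\qquad (x,y)\longmapsto L_1(x)+L_2(y),
\]
with $L_1(x):=c_1 x + c_4 x^q + c_3 x^{q^2}$ and $L_2(y):=c_2 y + c_3 y^q + c_4 y^{q^3}$, is surjective (its kernel being $U\cap M$ and $\dim_{\F_q} U=8$).

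Because the coefficients of $g$ lie in $\F_{q^2}$, the map $g$ commutes with $\sigma$, so $\mathrm{Im}(g)$ is a $\sigma$-stable $\F_q$-subspace of $T$; since $\sigma$ has no nonzero fixed points on $T$, such subspaces have $\F_q$-dimension in $\{0,2,4\}$ (which is consistent with Lemma \ref{lemma}'s parity). The case $\dim\mathrm{Im}(g)=0$ is immediate: any $\F_q$-linearized polynomial of degree at most $q^2$ vanishing on $T$ must be a scalar multiple of $x+x^{q^2}+x^{q^4}$, and matching coefficients in $L_1$ and $L_2$ forces $c=0$. The main obstacle is excluding $\dim\mathrm{Im}(g)=2$: in that case $L_1(T)$ and $L_2(T)$ both lie in a common $1$-dimensional $\F_{q^2}$-submodule $W$ of $T$ for the $\sigma$-action, and I would compose $g$ with the $\sigma$-equivariant projection $\pi\colon T\to T/W$ and expand $\pi\circ L_1$ and $\pi\circ L_2$ as linearized polynomials modulo $x+x^{q^2}+x^{q^4}$. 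Using the quadratic relation $\sigma^2+\sigma+\mathrm{id}=0$ on $T$ together with characteristic $2$, the resulting linear system in the $c_i\in\F_{q^2}$ should admit only the solution $c=0$, thereby excluding $\dim_{\F_q}(U\cap M)=6$ and yielding the bound $\dim_{\F_q}(U\cap M)\leq 4$.
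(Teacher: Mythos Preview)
Your approach differs substantially from the paper's. The paper writes $M$ as $ax+by+cw+dt=0$ with $a,b,c,d\in\F_{q^2}$ and directly manipulates this equation together with its Frobenius twists to bound the number of solutions by $q^5$; the parity Lemma then forces the weight down to at most $4$. You instead recast the problem via the $\sigma$-equivariant map $g\colon U\to T$ and aim for surjectivity. Your preliminary reductions are sound: $U\subseteq T^4$, the defining form can be rescaled into $\F_{q^2}^4$, $g$ commutes with $\sigma$, hence $\dim_{\F_q}\mathrm{Im}(g)\in\{0,2,4\}$. Since $\mathrm{Im}(g)\subseteq T$ already gives $\dim_{\F_q}(U\cap M)\geq 4$, your claim that the corollary is equivalent to surjectivity of $g$ is correct, and in fact your route would yield the sharper statement that every $\sigma$-fixed hyperplane has weight exactly $4$. (Minor point: $L_2$ has $q$-degree $3$, not $\leq 2$, but the same root-count still kills the $\mathrm{Im}(g)=0$ case.)

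The genuine gap is that you do not prove the exclusion of $\dim_{\F_q}\mathrm{Im}(g)=2$; you only assert that a certain linear system ``should'' have no nonzero solution. That is exactly where all the content of the corollary lies, and your sketch does not indicate how to handle the dependence on the unknown $2$-dimensional $\sigma$-stable subspace $W$, of which there are $q^2+1$. If one actually carries out the computation --- for instance in the eigenbasis $T=E_\omega\oplus E_{\omega^2}$ for the scalar $\F_{q^2}$-action, parameterising $W$ by $s\in\F_{q^2}\cup\{\infty\}$ --- the obstruction boils down to the nonvanishing of $N^4+N^3+N^2+N+1$ for $N=s^{q+1}\in\F_q$, equivalently $5\nmid q-1$. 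This holds for $q=2^h$ with $h$ odd, but it is not a consequence of the relations you invoke ($\sigma^2+\sigma+\mathrm{id}=0$ on $T$ and characteristic $2$ alone); it plays the same role as the paper's endgame fact that $b^4+b+1=0$ has no root in $\F_{q^2}$. So your outline can be completed, but as written the decisive arithmetic step is missing.
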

\begin{proof}
    In light of the previous lemma, our task is to demonstrate that the weight of $M$ is at most 5.
    
    Let 
    \begin{equation*}
        ax+by+cz+dt=0,
    \end{equation*}
    with $a,b,c,d\in\F_{q^2}$, be an equation  describing $M$. 
     So we need to count the solutions $(u,v)$ of 
    \begin{equation}\label{w5}
        au+bv+c(u\qq+v\q)+d(u\q+v\qqq)=0,
    \end{equation}
    with $\textnormal{ Tr}_{q^6\mid q^2}(u)=\textnormal{Tr}_{q^6\mid q^2}(v)=0$.
    Observe that if $d=0$, (\ref{w5}) has at most $q^5$ solutions, so we can consider $d=1$.

    We obtain $$
    \begin{cases}
        au+bv+c(u\qq+v\q)+u\q+v\qqq=0\\
        a\q u\q+b\q v\q+c\q(u\qqq+v\qq)+u\qq+v+v\qq=0\\
        a u\qq+b v\qq+c(u+u\qq+v\qqq)+u\qqq+v\q+v\qqq=0.
    \end{cases}$$
    Combining the first and the third equation  we obtain
    \begin{equation}\label{w52}
    \begin{cases}
        (ac+a+c)u+(c+1)u\q+(a+c^2)u\qq+u\qqq+(bc+b)v+(c^2+c+1)v\q+bv\qq=0\\
        a\q u\q+b\q v\q+c\q(u\qqq+v\qq)+u\qq+v+v\qq=0.
    \end{cases}
    \end{equation}
    If $b=0$, considering the first equation,  (\ref{w5}) has at most $q^5$ solutions, so we consider $b\neq 0$. 
    
    Combining the two equations in \eqref{w52} one gets
    \begin{eqnarray}   
    (ac^{q+1}+ac+ac\q+a+c^{q+1}+c)u+(a\q b+c^{q+1}+c+c\q+1)u\q+(ac\q+a+b+c^{q+2}+c^2)u\qq+\nonumber\\\label{w53}(bc\q+c\q+1)u\qqq+b(c^{q+1}+c+c^q)v+(b^{q+1}+c^{q+2}+c^2+c^{q+1}+c+c^q+1)v\q=0.
    \end{eqnarray}
    If this is not the zero polynomial, we can conclude that \eqref{w5} has at most $q^5$ solutions: if the coefficients of $v$ and $v\q$ vanish, then we would have at most $q^3$ solutions $u$, and from the second equation of (\ref{w52}), we would obtain at most $q^5$ solutions $(u,v)$; if they do not vanish, we would obtain the bound directly from Equation (\ref{w53}). 

Thus, we only need to  prove that the following system  has no solutions
    $$\begin{cases}
      ac^{q+1}+ac+ac\q+a+c^{q+1}+c=0\\
      a\q b+c^{q+1}+c+c\q+1=0\\
      ac\q+a+b+c^{q+2}+c^2=0\\
      bc\q+c\q+1=0\\
      c^{q+1}+c+c^q=0\\
      b^{q+1}+c^{q+2}+c^2+c^{q+1}+c+c^q+1=0.
    \end{cases}$$
    From the fourth equation we obtain $b\neq 1$ and $c=\frac{1}{1+b\q}$.

    We can substitute it in the other equations and obtain
     $$\begin{cases}
      b(ab^{q} + 1)=0\\
    b(a\q b^{q+1} + a\q b + a\q b\q + a\q  + b^{q+})=0\\
    b(ab^{q+1} + a+ b^{2q+1} + b + b^{2q})=0\\
    b(b + b^{q} + 1)=0\\
    b(b^{3q+1} + b^{q+1} + b^{3q} + b^{2q} + 1)=0.
    \end{cases}$$
    From the fourth equation we obtain $b^q=b+1$, and substituting it in the fifth equation we obtain 
    \begin{equation*}
        b(b^4+b+1)=0.
    \end{equation*}
    Since $b\neq 0$,  this equation has solution in $\F_{2^4}\setminus\F_{2^2}$. We have $b\in\F_{q^2}\cap\F_{2^4}=\F_{2^2}$, a contradiction.
\end{proof}
\begin{thm}\label{thm3to2}
    A subspace $W$ of dimension $2$ not fixed by $x\longrightarrow x\qq$ contained in a subspace $M$ of dimension $3$ fixed by $x\longrightarrow x\qq$ has weight at most 2.
\end{thm}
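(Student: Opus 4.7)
The plan is to leverage the Frobenius map $\phi\colon v\mapsto v^{q^2}$ (applied componentwise in $\F_{q^6}^4$), which stabilises $U$ (as observed in the proof of Lemma 2.2) and $M$ (by hypothesis), but not $W$. Since $\phi$ is an $\F_q$-linear bijection of $\F_{q^6}^4$ preserving $U$, it restricts to an $\F_q$-linear bijection between $U\cap W$ and $U\cap W^{q^2}$; in particular, setting $k:=\dim_q(U\cap W)$, one also has $\dim_q(U\cap W^{q^2})=k$.

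Next I would establish that $L:=W\cap W^{q^2}$ is an $\F_{q^6}$-line. Both $W$ and $W^{q^2}$ are $2$-dimensional $\F_{q^6}$-subspaces of the $3$-dimensional space $M$ (the inclusion $W^{q^2}\subseteq M$ follows from $M^{q^2}=M$), so Grassmann's formula gives $\dim_{\F_{q^6}}(W\cap W^{q^2})\geq 1$; the hypothesis $W\neq W^{q^2}$ forces equality.

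The conclusion then follows from a direct dimension count: both $U\cap W$ and $U\cap W^{q^2}$ sit inside $U\cap M$, so by the preceding corollary
\[
2k-\dim_q(U\cap L)=\dim_q(U\cap W)+\dim_q(U\cap W^{q^2})-\dim_q\bigl((U\cap W)\cap(U\cap W^{q^2})\bigr)\leq \dim_q(U\cap M)\leq 4.
\]
Since $U$ is scattered, the $\F_{q^6}$-line $L$ satisfies $\dim_q(U\cap L)\leq 1$, so $2k\leq 5$, giving $k\leq 2$ as required.

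There is no substantial obstacle here; the main thing to verify is that $\phi$ genuinely restricts to a bijection $U\cap W\to U\cap W^{q^2}$, which is immediate from the $\phi$-invariance of $U$. The two bounds $\dim_q(U\cap M)\leq 4$ (from the corollary) and $\dim_q(U\cap L)\leq 1$ (from scatteredness) combine tightly to yield exactly the desired inequality, with no polynomial identities or case analysis of the kind used in the previous corollary.
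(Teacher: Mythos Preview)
Your proof is correct and follows essentially the same approach as the paper's: both use that $\phi$ fixes $U$ and $M$ but not $W$, intersect $W$ and $W^{q^2}$ inside $M$ to obtain an $\F_{q^6}$-line $L$, and then invoke scatteredness on $L$. Your version is in fact cleaner, since you make explicit the Grassmann-formula step and the use of the preceding corollary's bound $\dim_q(U\cap M)\leq 4$, which the paper's proof leaves implicit.
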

\begin{proof}
    Let the weight of $W$ in $U$ be greater than 2. Consider the subspace $W\qq=\{x\qq:x\in W\}\subset U\cap M$. From the assumptions we have $W\neq W\qq$. Since $W,W\qq\subset M$, $W\cap W\qq$ is a subspace of dimension $1$ of weight at least $2$, a contradiction since $U$ is scattered.
\end{proof}

\section{Proof of Main Theorem}
The aim of this section is to prove that $U$ is 2-scattered, hence that for every 2-dimensional subspace of $V(4,q^6)$ $$\begin{cases}
    au+bv+a'w+b't=0\\
    cu+dv+c'w+b't=0,
\end{cases}$$ the system below 
$$\begin{cases}
    a_0u+a_1v+a_2w+a_3t=0\\
    b_0u+b_1v+b_2w+b_3t=0\\
    w=u\qq+v\q\\
    t=u\q+v\qqq\\
    u+u\qq+u\qqqq=0\\
    v+v\qq+v\qqqq=0    
\end{cases}$$
has at most $q^2$ solutions in $u$ and $v$.

Every 2-dimensional subspace of $V(4,q^6)$, say $W$, is equivalent to either
\begin{equation} \label{retta1}\begin{cases}
    au+bv=0\\
    cu+dv=0,        
\end{cases}\end{equation}
with $ad-bc\ne 0$, or
\begin{equation} \label{retta2}\begin{cases}
    u=0\\
    b_1v+b_2w+b_3t=0,
\end{cases}\end{equation}
with $(b_1,b_2,b_3)\ne (0,0,0)$, or
\begin{equation} \label{retta3}\begin{cases}
    au+v=0\\
    b_0u+b_2w+b_3t=0,    
\end{cases}\end{equation}
with $(b_0,b_2,b_3)\ne (0,0,0)$, or\begin{equation} \label{retta4}\begin{cases}
    au+bv+w=0\\
    cu+dv+t=0. 
\end{cases}\end{equation}

Cases \eqref{retta1}  and \eqref{retta2} are trivial. Indeed, System \eqref{retta1} admits the unique solution $(u,v)=(0,0)$, whereas for the subspace $W$ represented by System \eqref{retta2} we have to solve the following
$$\begin{cases}
   u=0\\
   b_1v+b_2w+b_3t=0\\
    w=v\q\\
    t=v\qqq\\
    v+v\qq+v\qqqq=0.    
\end{cases}$$
If $W$, represented by \eqref{retta2} is fixed by $x\longrightarrow x^{q^2}$, then from Lemma \ref{lemma} we get the result. If $W$ is not fixed by $x\longrightarrow x^{q^2}$, since it  
is contained in the 3-dimensional subspace $u=0$, the result follows from Theorem \ref{thm3to2}.

\subsection{Case $W$ represented by \eqref{retta3}}
In this section our aim is to prove that the system below has most $q^2$ solutions $(u,v)$ for any $a,b,c,d\in \mathbb{F}_{q^6}$.
\begin{equation} \label{eq sistema iniziale}\begin{cases}
F_1^{(0)}(u,v) := au+bv+u^{q^2} +v^q=0 \\
F_2^{(0)}(u,v) := cu+dv+u^q+v^{q^3}=0 \\
u+u^{q^2}+u^{q^4}=v+v^{q^2}+v^{q^4}=0.
\end{cases}
\end{equation}

To this end, we will consider the following equations.

\begin{eqnarray}
F_1^{(1)}(u,v) :=& a^qu^q+b^qv^q+u^{q^3} +v^{q^2}&=0\label{F1q}\\   
F_1^{(2)}(u,v) :=& a^{q^2}u^{q^2}+b^{q^2}v^{q^2}+u+u^{q^2} +v^{q^3}&=0\label{F1qq}\\ 
F_1^{(3)}(u,v) :=& a^{q^3}u^{q^3}+b^{q^3}v^{q^3}+u^q+u^{q^3} +v+v^{q^2}&=0\label{F1qqq}\\
F_1^{(4)}(u,v) :=& a^{q^4}(u+u^{q^2})+b^{q^4}(v+v^{q^2})+u +v^q+v^{q^3}&=0\label{F1qqqq}\\
F_1^{(5)}(u,v) :=& a^{q^5}(u^q+u^{q^3})+b^{q^5}(v^q+v^{q^3})+u^q +v&=0\label{F1qqqqq}\\
F_2^{(1)}(u,v) :=& c^qu^q+d^qv^q+u^{q^2}+v+v^{q^2}&=0\label{F2q}\\  
F_2^{(2)}(u,v) :=& c^{q^2}u^{q^2}+d^{q^2}v^{q^2}+u^{q^3}+v^q+v^{q^3}&=0\label{F2qq}\\  
F_2^{(3)}(u,v) :=& c^{q^3}u^{q^3}+d^{q^3}v^{q^3}+u+u^{q^2}+v&=0\label{F2qqq}\\ 
F_2^{(4)}(u,v) :=& c^{q^4}(u+u^{q^2})+d^{q^4}(v+v^{q^2})+u^q+u^{q^3}+v^q&=0\label{F2qqqq}\\ 
F_2^{(5)}(u,v) :=& c^{q^5}(u^q+u^{q^3})+d^{q^5}(v^q+v^{q^3})+u+v^{q^2}&=0.\label{F2qqqqq}
\end{eqnarray}
Also,
$$F_1^{(0)}+F_1^{(2)}+F_1^{(4)}=au+a\qq u\qq+a\qqqq u+a\qqqq u\qq+bv+b\qq v\qq+b\qqqq v+b\qqqq v\qq$$
and 
$$F_2^{(0)}+F_2^{(2)}+F_2^{(4)}=cu+c\qq u\qq+c\qqqq u+c\qqqq u\qq+dv+d\qq v\qq+d\qqqq v+d\qqqq v\qq.$$
Let $\lambda,\mu $ denote $au+a\qq u\qq+a\qqqq u+a\qqqq u\qq$ and $cu+c\qq u\qq+c\qqqq u+c\qqqq u\qq$, respectively. It is readily seen that $\lambda,\mu \in \mathbb{F}_{q^2}$ and $bv+b\qq v\qq+b\qqqq v+b\qqqq v\qq=\lambda$, $dv+d\qq v\qq+d\qqqq v+d\qqqq v\qq=\mu$.

Thus we obtain 
\begin{equation} \label{eqacu}
     (a c\qq +  a c\qqqq +  a\qq c +  a\qq c\qqqq +  a\qqqq c +  a\qqqq c\qq)u + (c+c\qq)^{q^2} \lambda + (a+a\qq)^{q^2} \mu=0
     \end{equation}
and 
 \begin{equation} \label{eqbdv}
     (b d\qq +  b d\qqqq +  b\qq d +  b\qq d\qqqq +  b\qqqq d +  b\qqqq d\qq)v + (d+d\qq)^{q^2} \lambda + (b+b\qq)^{q^2} \mu=0.
     \end{equation}

Let $\alpha,\beta,\gamma$ denote the following  
  \begin{eqnarray}
     \alpha &:=&  a c\qq +  a c\qqqq +  a\qq c +  a\qq c\qqqq +  a\qqqq c +  a\qqqq c\qq \\
     \beta &:=& b d\qq +  b d\qqqq +  b\qq d +  b\qq d\qqqq +  b\qqqq d +  b\qqqq d\qq \\
     \gamma &:=& (b\qq+b\qqqq)(c\qq+c\qqqq)+(d\qq+d\qqqq)(a\qq+a\qqqq).
 \end{eqnarray}
 
 \subsubsection{$\alpha \neq 0$ and $\beta=0$}

\begin{thm}
    If $\alpha\neq 0$, $\beta=0$, $b \in \mathbb{F}_{q^2}$ then System \eqref{eq sistema iniziale} has at most $q^2$ solutions. 
\end{thm}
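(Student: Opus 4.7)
The plan hinges on the observation that, in characteristic $2$, the hypothesis $b\in\mathbb{F}_{q^2}$ forces $\lambda=0$ for every solution. Indeed, from $F_1^{(0)}+F_1^{(2)}+F_1^{(4)}=0$ one reads
\[
\lambda \;=\; bv+b^{q^2}v^{q^2}+b^{q^4}v+b^{q^4}v^{q^2},
\]
and under $b=b^{q^2}=b^{q^4}$ this sum collapses to $bv+bv^{q^2}+bv+bv^{q^2}=0$. Substituting $\lambda=0$ into \eqref{eqacu} together with $\alpha\neq 0$ yields $u=\alpha^{-1}(a+a^{q^2})^{q^2}\mu$, an $\mathbb{F}_{q^2}$-linear function of $\mu\in\mathbb{F}_{q^2}$; hence $u$ can assume at most $q^2$ distinct values across all solutions of \eqref{eq sistema iniziale}. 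Equation \eqref{eqbdv} simultaneously reduces to $\beta v=0$ (using $b+b^{q^2}=0$ and $\lambda=0$), which under the hypothesis $\beta=0$ imposes no further restriction.

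It then remains to prove that for each such $u$ at most one $v\in\mathbb{F}_{q^6}$ completes it to a solution. The residual constraints on $v$ are
\[
bv+v^q=au+u^{q^2},\qquad dv+v^{q^3}=cu+u^q,\qquad v+v^{q^2}+v^{q^4}=0,
\]
an affine $\mathbb{F}_q$-linear system whose solution set is either empty or a coset of the associated homogeneous kernel $K$. For $v\in K$, iterating $v^q=bv$ and using $b^{q^2}=b$ gives $v^{q^2}=b^{q+1}v$, $v^{q^3}=b^{q+2}v$, and $v^{q^4}=(b^{q+1})^2v$. The equation $v^{q^3}=dv$ then forces $(d+b^{q+2})v=0$, so $v=0$ unless $d=b^{q+2}$.

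The main subtlety sits in the degenerate case $d=b^{q+2}$, where the two non-trace equations alone do not pin down $v$; here the trace condition becomes decisive, yielding $\bigl(1+b^{q+1}+(b^{q+1})^2\bigr)v=0$. Since $b^{q+1}=N_{\mathbb{F}_{q^2}/\mathbb{F}_q}(b)\in\mathbb{F}_q$ and $x^2+x+1$ is irreducible over $\mathbb{F}_q$ precisely when $3\nmid q-1$, which holds for $q=2^h$ with $h$ odd (so that $q\equiv 2\pmod 3$), the coefficient is nonzero and $v=0$ again. Therefore $K=\{0\}$, each admissible $u$ admits at most one $v$, and the system has at most $q^2$ solutions. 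The odd-$h$ hypothesis is used exactly once, to dispose of this final pathology.
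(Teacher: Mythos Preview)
Your proof is correct and follows the same strategy as the paper: both deduce $\lambda=0$ from $b\in\F_{q^2}$, conclude that $u$ takes at most $q^2$ values, and then pin down $v$ uniquely via the non-vanishing of $b^{2(q+1)}+b^{q+1}+1$ over $\F_q$ (using $q\equiv 2\pmod 3$). The only organizational difference is that the paper parametrizes $u=\rho(a^{q^2}+a^{q^4})$ directly from $\lambda=0$ and iterates $F_1^{(0)}$ alone (together with the trace on $v$) to solve for $v$, whereas you route through \eqref{eqacu} and analyze the homogeneous kernel using both $F_1^{(0)}$ and $F_2^{(0)}$, incurring a harmless but unnecessary case split on $d=b^{q+2}$.
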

\begin{proof}
In this case $F_1^{(0)}+F_1^{(2)}+F_1^{(4)}$ reads $au+a\qq u\qq+a\qqqq u+a\qqqq u\qq$. Note that $a,c \notin \mathbb{F}_{q^2}$ otherwise $\alpha=0$. 
Thus $u=\rho (a\qq+a\qqqq)$, for some  $\rho \in \mathbb{F}_{q^2}$. 
Now
$$F_1^{(0)}(\rho (a\qq+a\qqqq),v)=v^q+bv+(a^{q^2+1}+a^{q^4+1}+a+a^{q^4})\rho,$$
from which we get
\begin{eqnarray*}
v^{q^2}&=& ( a^{q^2+1}b\q +  a^{q^4+1}b\q + a b\q+a\qqqq b\q)\rho + (a^{q+q^3} +  a^{q^5+q} + a\q+a\qqqqq)\rho^q  + b^{q+1}v\\
v^{q^4}&=&  (a^{q^2+1} b^{2q+1}+  a^{q^2+1}b\q +  a^{q^4+1} b^{2q+1} + a  b^{2q+1} + 
    a b\q +  a^{q^4+q^2}b\q  + a\qq b\q + a\qqqq  b^{2q+1})\rho \\&&+ (a^{q^3+q}  b^{q+1} + a^{q^3+q} +  a^{q^5+q} b^{q+1} + a\q  b^{q+1}
    + a\q+  a^{q^3+q^5} + a\qqq  
    + a\qqqqq  b^{q+1}) \rho^q   +  b^{2q+2}  v.
\end{eqnarray*}
Recalling that $v+v^{q^2}+v^{q^4}=0$, one gets 
\begin{eqnarray*}( b^{2(q+1)} +  b^{q+1}+1)v= (a^{q^2+1}b^{2q+1} +  a^{q^4+1} b^{2q+1} +  a^{q^4+1}b\q  + a  b^{2q+1}+ 
     a^{q^4+q^2}b\q  + a\qq b\q + a\qqqq  b^{1+2q} + a\qqqq b\q ) \rho \\+ 
    (a^{q+q^3}  b^{q+1} +  a^{q^5+q} b^{q+1} +  a^{q^5+q} + a\q  b^{q+1}  +  a^{q^3+q^5} + a\qqq  + a\qqqqq  b^{q+1} + a\qqqqq )\rho^q.  \end{eqnarray*}
If $( b^{2(q+1)} +  b^{q+1} +1)=0$ we have $b^{q+1}=\omega^i$, $i=1,2$, where $\langle \omega\rangle =\mathbb{F}_4\not\subset \mathbb{F}_q$, a contradiction to $b \in \mathbb{F}_{q^2}$. Hence $v$ directly depends on $\rho$ and thus System \eqref{eq sistema iniziale} has at most $q^2$ solutions.
\end{proof}

\begin{thm}
    If $\alpha\neq 0$, $\beta=0$, $d \in \mathbb{F}_{q^2}$ then System \eqref{eq sistema iniziale} has at most $q^2$ solutions. 
\end{thm}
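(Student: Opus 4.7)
The plan is to mirror the argument of the preceding theorem, with the roles of $F_1^{(\cdot)}$ and $F_2^{(\cdot)}$ interchanged and $d$ taking the place of $b$. First I would exploit $d\in\F_{q^2}$ (hence $d^{q^2}=d^{q^4}=d$) together with the characteristic $2$ to collapse the sum $F_2^{(0)}+F_2^{(2)}+F_2^{(4)}=0$: the $v$-contribution is $dv+dv^{q^2}+dv+dv^{q^2}=0$, so the relation reduces to the purely $u$-side equation
\[
(c+c^{q^4})u+(c^{q^2}+c^{q^4})u^{q^2}=0.
\]
Since $\alpha\neq 0$ prevents both $a$ and $c$ from lying in $\F_{q^2}$ (otherwise $\alpha$ collapses by the same pairing argument used implicitly in the previous theorem), in particular $c^{q^2}\neq c^{q^4}$, and $u$ is parametrised as $u=\rho(c^{q^2}+c^{q^4})$ with $\rho\in\F_{q^2}$. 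This already limits the admissible values of $u$ to at most $q^2$.

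Next I would substitute this expression for $u$ into $F_2^{(0)}$ to obtain a relation of the shape
\[
v^{q^3}=dv+A(\rho),\qquad A(\rho):=cu+u^q,
\]
where $A$ is $\F_q$-linear in $\rho$, and combine it with $F_1^{(0)}$ in the form $v^q=bv+B(\rho)$ with $B(\rho):=au+u^{q^2}$. Applying suitable Frobenius powers and using the trace identities $v+v^{q^2}+v^{q^4}=0$ and $v^{q^5}=v^q+v^{q^3}$, the goal is to eliminate every Frobenius twist of $v$ in favour of $v$ itself, landing on a single linear equation
\[
C\cdot v=P(\rho),
\]
with $P$ an $\F_q$-linearised polynomial in $\rho$ and $C$ a polynomial in $b,c,d$ and their Frobenius conjugates, structurally analogous to the coefficient $b^{2(q+1)}+b^{q+1}+1$ that appears in the previous theorem.

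The hard part will be showing $C\neq 0$. By structural analogy I expect $C$ to reduce, after exploiting $d\in\F_{q^2}$, to an expression of the type $1+N+N^2$ with $N=d^{q+1}\in\F_q$. Such a quantity cannot vanish because $q=2^h$ with $h$ odd forces $3\nmid q-1$, so $\F_q$ contains no primitive cube root of unity, exactly as in the previous proof. A subtlety is that the most direct elimination (iterating only $F_2^{(0)}$, which steps by $q^3$) produces only the shorter coefficient $1+d^{q+1}$, and this genuinely vanishes whenever $d$ has norm one over $\F_q$. To avoid this I would interlace the $F_1^{(\cdot)}$ and $F_2^{(\cdot)}$ equations symmetrically, so that the elimination chain has the correct length for the full $1+N+N^2$ shape to re-emerge; ruling out the finite set of residual degenerate $(a,b,c,d)$ where $C$ would still vanish, by appealing to $\alpha\neq 0$ and the remaining equations, is where the bulk of the routine computation lies.
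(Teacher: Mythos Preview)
Your opening is correct and matches the paper: the hypothesis $d\in\F_{q^2}$ collapses $F_2^{(0)}+F_2^{(2)}+F_2^{(4)}$ to the pure $u$-relation $(c+c^{q^4})u+(c^{q^2}+c^{q^4})u^{q^2}=0$, and $\alpha\neq 0$ forces $c\notin\F_{q^2}$, so $u=\rho(c^{q^2}+c^{q^4})$ with $\rho\in\F_{q^2}$. From this point on, however, your plan diverges from what actually works, and the analogy you lean on breaks down.

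The expectation that the $v$-coefficient will take the shape $1+N+N^2$ with $N=d^{q+1}$ is incorrect. In the preceding theorem that shape arose because the iteration used $F_1^{(0)}$ (step $q$) together with $b\in\F_{q^2}$, so that $b^{q^3+q^2+q+1}=b^{2(q+1)}$. Here $d\in\F_{q^2}$, but $F_2^{(0)}$ steps by $q^3$, whose orbit on exponents modulo $6$ has length $2$, not $3$; this is exactly why you only ever get $1+d^{q+1}$. If instead you iterate $F_1^{(0)}$, the resulting $v$-coefficient is built from $b$, which is \emph{not} assumed to lie in $\F_{q^2}$, and what emerges is $b^{q^2+q+1}+b^{q^4+q+1}+b+b^{q^4}$ (or, equating with $F_2^{(0)}$, simply $b^{q^2+q+1}+d$). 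Neither expression has the cyclotomic form you want, and both genuinely vanish on nontrivial loci that $\alpha\neq 0$ alone does not exclude --- in fact the degenerate pair $d=b^{q^2+q+1}$, $b^{q^3+q^2+q+1}+b^{q+1}+1=0$ is precisely the residual case that the \emph{next} theorem (for $b,d\notin\F_{q^2}$) has to dispose of by a further, non-routine computation.

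The paper avoids this entirely by changing the target coefficient. After writing $v^{q^3}$ in terms of $v,\rho,\rho^q$ via iterated $F_1^{(0)}$, it feeds this into $F_1^{(4)}$ (not into the trace identity) and obtains a single linear relation in $v,\rho,\rho^q$. The $v$-coefficient is indeed the uncontrollable $b^{q^2+q+1}+b^{q^4+q+1}+b+b^{q^4}$, but the $\rho^q$-coefficient factors cleanly as
\[
(b+b^{q^2})^{q^2}\bigl(a^q(c+c^{q^2})^{q^3}+(c+c^{q^2})^{q^5}\bigr),
\]
and one checks directly that vanishing of the second factor forces $a=(c+c^{q^2})^{q^4}/(c+c^{q^2})^{q^2}$, which in turn gives $\alpha=0$. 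Thus the hypothesis $\alpha\neq 0$ enters through a specific algebraic identity tying $a$ to $c$, not through ``ruling out a finite residual set'' as you suggest. This is the missing idea in your plan: the nonvanishing you need is on the $\rho^q$-side, and it is established via a concrete factorisation linked to $\alpha$, rather than a $1+N+N^2$ argument on the $v$-side.
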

\begin{proof}
We proceed as in the proof of previous theorem. Recall that $a, c\notin \mathbb{F}_q^{2}$ otherwise $\alpha=0$.

In this case $F_2^{(0)}+F_2^{(2)}+F_2^{(4)}$ reads $cu+c\qq u\qq+c\qqqq u+c\qqqq u\qq=0$, and thus  $u=\rho (c\qq+c\qqqq)$ for some $\rho \in \mathbb{F}_{q^2}$.
Now
$$F_1^{(0)}(\rho (c\qq+c\qqqq),v)=v^q+bv+(a(c^{q^2}+c^{q^4})+c+c^{q^4})\rho,$$
from which we get 
\begin{eqnarray*}
v^{q^3}=a( b^{q^2+q} c\qq + a b^{q+q^2} c\qqqq + a\qq c + 
            a\qq c\qqqq + 
            a\qq c\qqqq + b^{q^2+q} c + b^{q^2+q} c\qqqq + c + c\qq)\rho \\+ (a\q b\qq c\qqq  + a\q b\qq c\qqqqq       + b\qq c\q  + 
    b\qq c\qqqqq )\rho\q +b^{1+q+q^2}v.
\end{eqnarray*}
Since $F_1^{(4)}(\rho (c\qq+c\qqqq),v)=0$, one gets 
\begin{eqnarray} ( b^{q^2+q+1} +  b^{q^4+q+1} +  b +  b\qqqq )v+( a  b^{q^2+q} c\qq +  a  b^{q^2+q} c\qqqq +  a  b^{q^4+q}  c\qq +  a  b^{q^4+q}  c\qqqq +  a  c\qq +  a  c\qqqq + 
         a\qq  c +  a\qq  c\qqqq\nonumber\\ +  a\qqqq  c +  a\qqqq  c\qq +  b^{q^2+q} c +  b^{q^2+q} c\qqqq +  b^{q^4+q}  c + 
         b^{q^4+q}  c\qqqq)\rho+(b + b^{q^2})^{q^2}(a^q(c+c^{q^2})^{q^{3}}+(c+c^{q^2})^{q^{5}})\rho^q=0.\label{Eqr}
\end{eqnarray}
System \eqref{eq sistema iniziale} can be seen now as a linear system in the unknowns $v,\rho,\rho^q$ and this shows that it has at most $q^3$ solutions. 
Note that $b + b^{q^2}\neq 0$ and $a^q(c+c^{q^2})^{q^{3}}+(c+c^{q^2})^{q^{5}}=0$ would imply 
$$a= \frac{(c+c^{q^2})^{q^{4}}}{(c+c^{q^2})^{q^{2}}}, \qquad a^{q^2}= \frac{(c+c^{q^2})}{(c+c^{q^2})^{q^{4}}},\qquad a^{q^4}= \frac{(c+c^{q^2})^{q^4}}{(c+c^{q^2})} $$
and thus $\alpha=0$, a contradiction. Thus, the coefficient of $\rho^q$ in Equation \eqref{Eqr} is nonzero and the total number of solutions of System \eqref{eq sistema iniziale} is at most $q^2$.
\end{proof}

\begin{thm}
 If $\alpha\neq 0$, $\beta=0$, $b,d \notin \mathbb{F}_{q^2}$ then System \eqref{eq sistema iniziale} has at most $q^2$ solutions.     
\end{thm}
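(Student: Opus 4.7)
My plan is to adapt the strategy of the previous two theorems to the setting where neither $b$ nor $d$ lies in $\F_{q^2}$. The starting point is that $\alpha \neq 0$ lets me solve Equation \eqref{eqacu} for $u$ in terms of $\lambda, \mu \in \F_{q^2}$:
\[
u = \frac{(c+c\qq)\qq \,\lambda + (a+a\qq)\qq \,\mu}{\alpha}.
\]
Since $b, d \notin \F_{q^2}$, the coefficients $(b+b\qq)\qq$ and $(d+d\qq)\qq$ appearing in \eqref{eqbdv} are both nonzero, and $\beta = 0$ reduces that equation to the single relation $\mu = K \lambda$ with $K := (d+d\qq)\qq / (b+b\qq)\qq$. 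A direct calculation shows that $K\qq = K$ is equivalent to the vanishing of $\beta$: expanding the identity $(d\qq + d\qqqq)(b + b\qqqq) = (d + d\qqqq)(b\qq + b\qqqq)$ in characteristic $2$ recovers exactly the six-term expression defining $\beta$. Hence the constraint $\mu = K\lambda$ is automatically compatible with $\lambda, \mu \in \F_{q^2}$, and $u$ becomes an $\F_{q^2}$-linear function of the single parameter $\lambda \in \F_{q^2}$, producing at most $q^2$ admissible values of $u$.

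For each such $u$, I next show that $v$ is pinned down. Writing $A := au + u\qq$ and $B := cu + u\q$, the equation $F_1^{(0)} = 0$ reads $v\q = bv + A$; iterating gives
\[
v\qq = b^{q+1}\, v + b\q A + A\q, \qquad v\qqq = b^{q^2+q+1}\, v + b^{q^2+q} A + b\qq A\q + A\qq.
\]
Comparing the last expression with $v\qqq = dv + B$ from $F_2^{(0)} = 0$ yields
\[
(b^{q^2+q+1} + d)\, v \;=\; B + A\qq + b^{q^2+q} A + b\qq A\q. \qquad (\star)
\]
As a second independent constraint, the trace-zero condition $v + v\qq + v\qqqq = 0$, after computing $v\qqqq$ by one further iteration, converts into
\[
(1 + b^{q+1} + b^{q^3+q^2+q+1})\, v \;=\; (b\q + b^{q^3+q^2+q}) A + (1 + b^{q^3+q^2}) A\q + b\qqq A\qq + A\qqq. \qquad (\star\star)
\]
Whenever the coefficient of $v$ in $(\star)$ or in $(\star\star)$ is nonzero, $v$ is determined by $u$ (hence by $\lambda$), and the bound of $q^2$ solutions follows at once.

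The main obstacle is the simultaneous vanishing of the two coefficients, namely $d = b^{q^2+q+1}$ together with $1 + b^{q+1} + b^{q^3+q^2+q+1} = 0$. The plan is to substitute the first relation into $\beta = 0$ to obtain a polynomial identity in $b$ alone, and then combine it via resultants with respect to $b\q$ and $b\qq$ with the second relation, in the style of the scatteredness proof. The aim is to force $b$ into a proper subfield (typically $\F_4$ or $\F_{16}$), which contradicts $b \notin \F_{q^2}$: the hypothesis $q = 2^h$ with $h$ odd yields $\F_{q^2} \cap \F_{16} = \F_4$, ruling out the exceptional configurations exactly as in the scattered proof and the two previous subcases. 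In a residual degenerate subcase where both coefficients \emph{and} the corresponding right-hand sides vanish identically, $v$ would lie in the intersection of the kernels of $v \mapsto v\q + bv$ and $v \mapsto v\qqq + dv$ inside the trace-zero subspace of $\F_{q^6}$; a direct dimension count shows that this intersection has $\F_q$-dimension at most two, so the total number of solutions is still bounded by $q^2$.
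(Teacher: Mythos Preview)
Your opening moves match the paper's: from $\beta=0$ with $b,d\notin\F_{q^2}$ you get $\mu=K\lambda$, hence $u=G(\lambda)$ depends on a single parameter $\lambda\in\F_{q^2}$, and the two equations $(\star)$ and $(\star\star)$ isolate exactly the same degenerate configuration the paper finds, namely $d=b^{q^2+q+1}$ together with $b^{q^3+q^2+q+1}+b^{q+1}+1=0$.

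The gap is in how you propose to dispose of this degenerate case. Your plan is to substitute $d=b^{q^2+q+1}$ into $\beta=0$ and combine the result with $b^{q^3+q^2+q+1}+b^{q+1}+1=0$ via resultants, hoping to force $b$ into a small subfield. This cannot work: a direct computation (write $X=b$, $Y=b^q$, $Z=b^{q^2}$ and use $b^{q^3}=(XY+1)/(XYZ)$ to reduce all higher Frobenius powers) shows that $\beta Y(XY+1)$ expands to zero identically in characteristic~$2$. In other words, once $d=b^{q^2+q+1}$ and $b^{q^3+q^2+q+1}+b^{q+1}+1=0$ hold, the condition $\beta=0$ is \emph{automatic} and carries no further information about $b$. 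There is nothing to take a resultant with, so no contradiction can be extracted from $b$ and $d$ alone. This is precisely why the paper's argument abandons the $b,d$ side at this point and instead examines $F_1^{(3)}(G(\lambda),v)$: the coefficient of $\lambda^q$ there is a nonzero multiple of an expression $L(a,b,c)$, and the key step is the trace identity $L+L^{q^2}+L^{q^4}=(b^{q^2+2q+1}+b^{q+1}+1)\alpha^q$, which is nonzero \emph{because $\alpha\neq 0$}. The resolution genuinely requires bringing $a,c$ back into play through $\alpha$; a purely $b,d$-based attack is doomed.

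Your fallback ``residual subcase'' is also not right as stated. If the right-hand sides of $(\star)$ and $(\star\star)$ vanish identically in $\lambda$, the original equations $F_1^{(0)}=0$ and $F_2^{(0)}=0$ still read $v^q+bv=A(u)$ and $v^{q^3}+dv=B(u)$ with $A,B$ depending on $u$; for generic $u$ these are affine, not homogeneous, conditions, so $v$ does not lie in the kernel of either map. The kernel count you invoke only bounds the solutions with $u=0$, and across all $q^2$ values of $u$ your argument yields at most $q^3$ pairs $(u,v)$, not $q^2$.
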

\begin{proof}
From Equation \eqref{eqbdv} it follows $$\mu=\frac{d\qq+d\qqqq}{b\qq+b\qqqq}\lambda$$ and thus, by Equation \eqref{eqacu},
   \begin{equation*}
    u:=G(\lambda )= \lambda  \frac{(c\qq +c\qqqq )+(a\qq +a\qqqq ) {( d\qq+ d\qqqq)}/{(b\qq + b\qqqq)}}{(a\qq +a\qqqq ) (c +c\qqqq )+(c\qq +c\qqqq ) (a +a\qqqq )}. 
    \end{equation*}
By $F_1^{(0)}=0$, 
$$v^q=au + bv + u^{q^2}=aG(\lambda) + bv + (G(\lambda))^{q^2}:= H(\lambda,v),$$
and thus in $F_2^{(0)}(G(\lambda),v)=cG(\lambda)+dv+(G(\lambda))^q+(H(\lambda,v))^{q^3}=0$, after clearing the denominators, the coefficient of $v$ is 
$$(b+b^{q^2})^{q^5+q^4+q^3+q^2}(d+b^{q^2+q+1})\alpha^{q+1}.$$
If this coefficient is nonzero, then $v$ can be written in terms of $\lambda \in \mathbb{F}_{q^2}$ and thus System \eqref{eq sistema iniziale} has at most $q^2$ solutions. On the other hand, by our assumptions, such a coefficient vanishes only if $d=b^{q^2+q+1}$. The same argument applies to $F_1^{(1)}(G(\lambda),v)$, whose coefficients in $v$ is 
$$(b+b^{q^2})^{q^5+q^4+q^3+q^2}(b^{q^3+q^2+q+1}+b^{q+1}+1)\alpha^{q+1}.$$
Thus, from now on we may assume $d=b^{q^2+q+1}$ and $b^{q^3+q^2+q+1}+b^{q+1}+1=0$. Note that $b^{q^2+2q+1}+b^{q+1}+1\neq0$ otherwise $b^{q^3+q^2+q+1}+b^{q+1}+1=0$ would imply $b \in \mathbb{F}_{q^2}$, a contradiction.

Imposing that $b^{q^3}=(b^{q+1}+1)/b^{q^2+q+1}$ one gets that, after clearing the denominators, in $F_1^{(3)}(G(\lambda),v)$ the coefficient of $\lambda^q$ is 
$$(b+b^{q^2})^3(b^{q^2+2q+1}+b^{q+1}+1)\alpha L(a,b,c),$$
where 
\begin{eqnarray}
L(a,b,c)&:=&    a^{q^3+q} b^{q^2+2q+1}  + a^{q^3+q} b^{q^2+q} + a^{q^3+q} + a^{q^5+q} b^{q^2+2q+1}  + a^{q^5+q} b^{q^2+q} +
        a^{q^5+q} + a\q b^{q^2+2q+1}  c\qqq+\nonumber\\&&  + a\q b^{q^2+2q+1}  c\qqqqq  + a\q  b^{q^2+2q+1}  + a\q  b^{q+1}  c\qqq  
        + a\q  b^{q+1} c\qqqqq  + a\q  b^{q^2+q} + a\q  c\qqq  + a\q  c\qqqqq  + a\q+\nonumber\\&&  + a\qqqqq  b^{q^2+2q+1}  + a\qqqqq  b^{q^2+q} +
        a\qqqqq  +b^{q^2+2q+1}   c\q  +b^{q^2+2q+1}  c\qqqqq  + b^{q+1}  c\q  + b^{q+1}  c\qqqqq  + c\q  + c\qqqqq .
\end{eqnarray}
If such a coefficient is nonzero we can obtain a nonzero linear equation involving $\lambda$, $\lambda^q$, and $v$ and thus System \eqref{eq sistema iniziale} has at most $q^2$ solutions.   
By direct computations, after clearing the denominators,
$$L(a,b,c)+(L(a,b,c))^{q^2}+(L(a,b,c))^{q^4}=(b^{q^2+2q+1}+b^{q+1}+1)\alpha^q$$
and thus $L(a,b,c)\neq 0$ and System \eqref{eq sistema iniziale} has at most $q^2$ solutions.  
\end{proof}

 \subsubsection{$\alpha =0$ and $\beta \neq 0$}

\begin{thm}
    If $\alpha= 0$, $\beta\neq0$, $a \in \mathbb{F}_{q^2}$ then System \eqref{eq sistema iniziale} has at most $q^2$ solutions. 
\end{thm}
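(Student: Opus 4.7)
The plan is to mirror the argument for the previously handled case $\alpha\neq 0$, $\beta=0$, $b\in\F_{q^2}$, with the roles of $u$ and $v$ (equivalently, of the coefficient pairs $(a,c)$ and $(b,d)$) interchanged.

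The opening observation is that $a\in\F_{q^2}$ together with $\Tr_{q^6/q^2}(u)=0$ forces $\lambda=\Tr_{q^6/q^2}(au)=a\,\Tr_{q^6/q^2}(u)=0$. Feeding $\lambda=0$ into \eqref{eqbdv} with $\beta\neq 0$ gives $v=(b+b^{q^2})^{q^2}\mu/\beta$, and since $\mu\in\F_{q^2}$ this parameterizes $v$ as $v=\rho\,(b^{q^2}+b^{q^4})$ for a suitable $\rho\in\F_{q^2}$; the same formula can be read off from the identity $F_1^{(0)}+F_1^{(2)}+F_1^{(4)}=0$, where $a\in\F_{q^2}$ annihilates the $u$-terms and leaves $(b+b^{q^4})v+(b^{q^2}+b^{q^4})v^{q^2}=0$. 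The degenerate subcase $b\in\F_{q^2}$ is then handled separately: \eqref{eqbdv} forces $v=0$, after which $F_1^{(0)}(u,0)=au+u^{q^2}=0$ and $F_2^{(0)}(u,0)=cu+u^q=0$, together with the trace condition, bound the admissible $u$ by $q^2$ via the standard analysis of $u^{q^2}=au$ and $u^q=cu$ in $T$.

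The main step, assuming $b\notin\F_{q^2}$, is to substitute $v=\rho\,(b^{q^2}+b^{q^4})$ into $F_1^{(0)}=0$, obtaining $u^{q^2}$ as an $\F_q$-linear function of $u,\rho,\rho^q$. Applying $x\mapsto x^{q^2}$ (and using $a,\rho\in\F_{q^2}$) to compute $u^{q^4}$, and imposing $u+u^{q^2}+u^{q^4}=0$, the system collapses to a relation of the shape
\[
(1+a+a^2)\,u=X(a,b)\,\rho+Y(a,b)\,\rho^q,
\]
with explicit $X,Y\in\F_{q^6}$. Whenever $1+a+a^2\neq 0$---that is, $a\notin\F_4\setminus\F_2$---the vector $u$ is then an $\F_q$-linear function of $\rho\in\F_{q^2}$, so at most $q^2$ pairs $(u,v)$ solve the system.

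The genuine obstacle is the exceptional case $a\in\F_4\setminus\F_2$, where $1+a+a^2=0$ and the above relation degenerates into a constraint on $\rho,\rho^q$ alone. Here the plan is to bring $F_2^{(0)}$ into play in parallel: substituting $v=\rho\,(b^{q^2}+b^{q^4})$ into $F_2^{(0)}=0$ and iterating with the conjugate trace relation $u^q+u^{q^3}+u^{q^5}=0$ yields a second linear relation, now involving $c$ and $d$. The two relations together should either force $\rho$ to lie in a $1$-dimensional $\F_q$-subspace of $\F_{q^2}$---after which the kernel of $u\mapsto u^{q^2}+au$ on $T$ contributes at most $q^2$ values of $u$, keeping the count at $q^2$---or become simultaneously trivial only on a locus in $(a,b,c,d)$-space cut out by a polynomial identity. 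Ruling out that locus should reduce, as in the preceding theorem, to a Frobenius-trace identity analogous to the one for $L(a,b,c)$, and this is the step I expect to require the most delicate computation.
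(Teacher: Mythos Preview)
Your parametrization $v=\rho(b^{q^2}+b^{q^4})$ with $\rho\in\F_{q^2}$ is exactly what the paper does (and the subcase $b\in\F_{q^2}$ you treat separately is in fact vacuous: $\beta\neq 0$ already forces $b,d\notin\F_{q^2}$). The divergence comes at the elimination step. You feed $v$ into $F_1^{(0)}$ to write $u^{q^2}$ in terms of $u,\rho,\rho^q$, iterate, and land on the coefficient $1+a+a^2$. But the mirror with the case $\alpha\neq 0,\ \beta=0,\ b\in\F_{q^2}$ breaks precisely here: in that case one obtained $b^{2(q+1)}+b^{q+1}+1$, which cannot vanish because $b^{q+1}\in\F_q$ and $\F_4\cap\F_q=\F_2$ (recall $q=2^h$ with $h$ odd). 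In the present case $a\in\F_{q^2}$ and $\F_4\subset\F_{q^2}$, so $1+a+a^2=0$ is genuinely possible, and your sketch for $a\in\F_4\setminus\F_2$ does not close: constraining $\rho$ to a one-dimensional $\F_q$-subspace still leaves $q$ values of $\rho$, and for each of these the affine equation $u^{q^2}+au=R(\rho)$ has kernel of size $q^2$ on $T$ (since $u^{q^2}=\omega u$ defines a one-dimensional $\F_{q^2}$-subspace lying entirely in $T$). That is $q\cdot q^2=q^3$, not $q^2$, so an additional equation on $u$---not just on $\rho$---is required, and your outline does not supply it.

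The paper avoids this by eliminating through $F_2^{(0)}$ rather than $F_1^{(0)}$: from $F_2^{(0)}(u,\rho(b^{q^2}+b^{q^4}))=0$ one writes $u^{q}=cu+(\text{linear in }\rho,\rho^q)$ and then substitutes into $F_1^{(0)}$, $F_1^{(2)}$ and $F_2^{(4)}$. The coefficient of $u$ in $F_1^{(0)}$ becomes $a+c^{q+1}$ and in $F_1^{(2)}$ becomes $ac^{q+1}+c^{q+1}+1$; if either survives one is done. If both vanish then indeed $a=c^{q+1}\in\F_4\setminus\F_2$, so the same exceptional locus reappears, but now the two $\rho^q$-coefficient conditions (from $F_1^{(0)}$ and $F_2^{(4)}$) give explicit relations among $b,c,d$ which, after eliminating $b^q$, collapse to $c\,(b^{q^3}+b^{q^5})=0$. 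This contradicts $c\neq 0$ (forced by $a=c^{q+1}\neq 0$) and $b\notin\F_{q^2}$. So the exceptional case is disposed of by a concrete four-equation contradiction rather than a second iteration-and-count; that is the missing ingredient in your proposal.
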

\begin{proof}
In this case $F_1^{(0)}+F_1^{(2)}+F_1^{(4)}$ reads $bv+b\qq v\qq+b\qqqq v+b\qqqq v\qq$. Note that $b,d \notin \mathbb{F}_{q^2}$ otherwise $\beta=0$. 
Thus $v=\rho (b\qq+b\qqqq)$, for some  $\rho \in \mathbb{F}_{q^2}$. Now,
$$F_2^{(0)}(u,\rho (b\qq+b\qqqq))=u^q+cu +(B^q+B^{q^5})\rho^q+d(b^{q^2}+b^{q^4})\rho.$$
Thus, the substitution $u^q=H(u,\rho):=cu +(B^q+B^{q^5})\rho^q+d(b^{q^2}+b^{q^4})\rho$ makes System \eqref{eq sistema iniziale}  a system in $u,\rho,\rho^q$. If one among the coefficients of $u,\rho,\rho^q$ is nonzero, such a system and thus System \eqref{eq sistema iniziale} has at most $q^2$ solutions. 
We consider the coefficients of $v$ in $F_1^{(0)}(u,\rho)$, and $F_1^{(2)}(u,\rho)$ together with the coefficient of $\rho^q$ in $F_1^{(0)}(u,\rho)$ and $F_2^{(4)}(u,\rho)$. If they all vanish then 

\begin{equation}\label{EqSopra}
\begin{cases}
    a + c^{q+1}=0\\
    a  c^{q+1}+  c^{q+1}+ 1=0\\
b\q c\q + b\qqq d\q + b\qqq + b\qqqqq c\q + b\qqqqq d\q + b\qqqqq=0\\
b\q  c^{q^2+q}+ b\q  c^{q^4+q}+ b\qqq c\qq d\q + b\qqq c\qqqq d\q + b\qqqqq  c^{q^2+q}+ b\qqqqq  c^{q^4+q}+ b\qqqqq c\qq d\q + b\qqqqq c\qqqq d\q=0
.\end{cases}
\end{equation}
First note that $c=0$ yields $a=0$, a contradiction to the second equation of System \eqref{EqSopra}. So $c\neq 0$. Also, $a=c^{q+1}$, $c^{q+1}=\omega^{i}$, $i=1,2$. The last equation gives 
$$b\q = \omega^2 b\qqq c d\q + \omega^2 b\qqqqq c d\q + b\qqqqq,$$
and thus $b\q c\q + b\qqq d\q + b\qqq + b\qqqqq c\q + b\qqqqq d\q + b\qqqqq=0$ yields $c(b^{q^3}+b^{q^5})=0$, a contradiction. This means that not all the coefficients in $v,\rho,\rho^q$ can vanish simultaneously and the claim follows.
\end{proof}

\begin{thm}
    If $\alpha= 0$, $\beta\neq0$, $c \in \mathbb{F}_{q^2}$ then System \eqref{eq sistema iniziale} has at most $q^2$ solutions. 
\end{thm}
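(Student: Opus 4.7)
The plan is to mirror the preceding theorem with the roles of the $F_1$- and $F_2$-equations (and correspondingly of the pairs $(a,b)$, $(c,d)$) interchanged, exploiting the duality between the hypotheses $a\in\F_{q^2}$ and $c\in\F_{q^2}$.

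First, the hypothesis $c\in\F_{q^2}$ gives $c^{q^2}=c^{q^4}=c$, so in characteristic two the auxiliary quantity $\mu=cu+c^{q^2}u^{q^2}+c^{q^4}u+c^{q^4}u^{q^2}$ vanishes identically. The relation $F_2^{(0)}+F_2^{(2)}+F_2^{(4)}=0$ therefore collapses to
$$ (d+d^{q^4})v+(d^{q^2}+d^{q^4})v^{q^2}=0. $$
Since $\beta\neq 0$ forces $d\notin\F_{q^2}$ (otherwise each $d$-factor in $\beta$ vanishes and $\beta=0$), this is a nonzero $q^2$-linearized equation, and it parametrizes
$$ v=\sigma(d^{q^2}+d^{q^4}),\qquad \sigma\in\F_{q^2}. $$

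Next I would substitute this expression for $v$ into $F_1^{(0)}$ to solve for $u^{q^2}$:
$$ u^{q^2}=au+b\sigma(d^{q^2}+d^{q^4})+\sigma^q(d^{q^3}+d^{q^5})=:K(u,\sigma,\sigma^q). $$
Iterating, every Frobenius power $u^{q^i}$ becomes an explicit $\F_{q^6}$-linear combination of $u,\sigma,\sigma^q$, and substituting these back into the remaining equations $F_1^{(j)}, F_2^{(j)}$ turns each of them into a single $\F_{q^6}$-linear relation in the three unknowns $u,\sigma,\sigma^q$. If the coefficient of $u$ is nonzero in any one of these relations, then $u$ is determined as an $\F_{q^6}$-linear function of $\sigma$ (note that $\sigma^q$ is a function of $\sigma\in\F_{q^2}$), so the number of solutions $(u,v)$ is bounded by $|\F_{q^2}|=q^2$.

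The remaining step, which I expect to be the main obstacle, is to exclude the simultaneous vanishing of the $u$-coefficients (together, where needed, with the $\sigma$- and $\sigma^q$-coefficients) in a sufficient collection of the linearized equations. Following the pattern of the preceding theorem I would impose these vanishings on a small subset such as $\{F_1^{(0)}, F_1^{(2)}, F_2^{(2)}\}$, producing a system of polynomial equations in $b,c,d$ analogous to \eqref{EqSopra}. The hypothesis $c\in\F_{q^2}$ substantially simplifies the $c$-entries of that system (all $c^{q^i}$ collapse to $c$), and the goal is to reduce everything to a short polynomial condition in $b,d$ alone. By analogy with the preceding proof, I expect the final contradiction to come from a low-degree relation such as $d^q=d+1$ combined with $d^4+d+1=0$, or a close analogue, forcing $d$ into a subfield $\F_{2^k}\cap\F_{q^2}=\F_{2^{\gcd(k,2)}}$ incompatible with both $d\notin\F_{q^2}$ and the running assumption that $q=2^h$ with $h$ odd.
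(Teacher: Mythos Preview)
Your parametrization $v=\sigma(d^{q^2}+d^{q^4})$ with $\sigma\in\F_{q^2}$ is correct and matches the paper. The gap is in what you do next. You substitute $v$ into $F_1^{(0)}$ to express $u^{q^2}$ in terms of $u,\sigma,\sigma^q$; but $F_1^{(0)}$ carries the unconstrained parameters $a,b$, so the $u$-coefficients that come out of iterating and imposing the trace relation or the other $F_i^{(j)}$ are complicated polynomials in $a,b$ (and $c,d$) whose nonvanishing you never actually establish. Your proposed endgame---a subfield contradiction of the shape $d^q=d+1$ together with $d^4+d+1=0$---is speculation imported from the $a\in\F_{q^2}$ case, and it is not what happens here. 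As written, the proof is incomplete at precisely the decisive step.

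The paper's argument avoids all of this by choosing the other equation: it substitutes $v$ into $F_2^{(0)}$, which contains the constrained parameter $c\in\F_{q^2}$. This gives $u^q=cu+(\text{terms in }\rho,\rho^q)$, and iterating (with $c^{q^2}=c$) produces $u^{q^2}=c^{q+1}u+\cdots$ and $u^{q^4}=c^{2(q+1)}u+\cdots$. The trace condition $u+u^{q^2}+u^{q^4}=0$ then has $u$-coefficient $1+c^{q+1}+c^{2(q+1)}$, which vanishes only if $c^{q+1}\in\F_4\setminus\F_2$; but $c^{q+1}\in\F_q$ and $\F_q\cap\F_4=\F_2$ since $h$ is odd, so the coefficient is nonzero and $u$ depends on $\sigma$ alone. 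In other words, the correct mirror is not the immediately preceding theorem ($\alpha=0$, $\beta\neq0$, $a\in\F_{q^2}$), but the earlier theorem with $\alpha\neq0$, $\beta=0$, $b\in\F_{q^2}$: in both cases one iterates using the equation that contains the parameter known to lie in $\F_{q^2}$.
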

\begin{proof}
In this case $F_2^{(0)}+F_2^{(2)}+F_2^{(4)}$ reads $dv+d\qq v\qq+d\qqqq v+d\qqqq v\qq$. Note that $b,d \notin \mathbb{F}_{q^2}$ otherwise $\beta=0$. 
Thus $v=\rho (d\qq+d\qqqq)$, for some  $\rho \in \mathbb{F}_{q^2}$. Also, Now
$$F_2^{(0)}(u,\rho (d\qq+d\qqqq))=c u + d^{q^2+1}\rho + d^{q^4+1}\rho + d\q\rho\q + d\qqqqq\rho\q + u^q.$$
It follows that 
\begin{eqnarray*}
    u^{q^2}&=&c^{q+1} u + c\q d^{q^2+1}\rho + c\q d^{q^4+1}\rho + c\q d\q\rho\q + c\q d\qqqqq\rho\q + d\rho + 
     d^{q^3+q}\rho\q +  d^{q^5+q}\rho\q + d\qq\rho\\
    u^{q^4}&=&  c^{2q+2} u +c^{2q+1} d^{q^2+1}\rho + c^{2q+1} d^{q^4+1}\rho + c^{2q+1} d\q\rho\q + 
    c^{2q+1} d\qqqqq\rho\q + c^{q+1} d\rho + c^{q+1}  d^{q^3+q}\rho\q + c^{q+1}  d^{q^5+q}\rho\q + 
    \\&&+c^{q+1} d\qq\rho+ c\q d^{q^2+1}\rho + c\q d\q\rho\q + c\q  d^{q^4+q^2}\rho + c\q d\qqq\rho\q + 
     d^{q^3+q}\rho\q + d\qq\rho + d^{q^5+q^3}\rho\q + d\qqqq\rho.
\end{eqnarray*}
Recalling that $u+u^{q^2}+u^{q^4}=0$ one gets 
\begin{eqnarray*}
       (c^{2q+2} + c^{q+1} + 1)u\!\!\!\!\!&=\!\!\!\!\!& c^{2q+1} d^{q^2+1}\rho + c^{2q+1} d^{q^4+1}\rho + c^{2q+1} d\q\rho\q + c^{2q+1} d\qqqqq\rho\q + 
        c^{q+1} d\rho + c^{q+1}  d^{q^3+q}\rho\q + c^{q+1}  d^{q^5+q}\rho\q +\\&&+ c^{q+1} d\qq\rho + 
        c\q d^{q^4+1}\rho + c\q  d^{q^4+q^2}\rho + c\q d\qqq\rho\q + c\q d\qqqqq\rho\q + d\rho + 
         d^{q^5+q}\rho\q + d^{q^5+q^3}\rho\q + d\qqqq\rho.\\
\end{eqnarray*}
Since $ c^{2q+2} + c^{q+1} + 1=0$ would imply $c^{q+1}=\omega^i$, $i=1,2$, a contradiction to $c \in \mathbb{F}_{q^2}$ (since $\mathbb{F}_4\not\subset \mathbb{F}_q$), $u$ depends on $\rho \in \mathbb{F}_{q^2}$ and thus System \eqref{eq sistema iniziale} has at most $q^2$ solutions. 
\end{proof}

\begin{thm}
    If $\alpha= 0$, $\beta\neq0$, $a,c \notin \mathbb{F}_{q^2}$ then System \eqref{eq sistema iniziale} has at most $q^2$ solutions. 
\end{thm}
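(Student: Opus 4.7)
My plan is to follow the template of the analogous subcase $\alpha\neq 0,\,\beta=0,\,b,d\notin\mathbb{F}_{q^2}$ already treated above: reduce the system to an equation that is linear in $u$, whose coefficient either forces the bound $q^2$ at once or, when it vanishes, together with further equations extracted from the list \eqref{F1q}--\eqref{F2qqqqq} forces polynomial identities on $a,b,c,d$ that contradict the running hypotheses.

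First, since $a\notin\mathbb{F}_{q^2}$ we have $(a+a^{q^2})^{q^2}\neq 0$; hence $\alpha=0$ in \eqref{eqacu} gives
\[
\mu=\frac{(c+c^{q^2})^{q^2}}{(a+a^{q^2})^{q^2}}\,\lambda.
\]
Plugging this into \eqref{eqbdv} and using $\beta\neq 0$ yields $v=G(\lambda)$ as an explicit $\mathbb{F}_{q^6}$-linear function of $\lambda\in\mathbb{F}_{q^2}$. Since $\lambda^{q^2}=\lambda$, every Frobenius image $v^{q^i}$ is then an $\mathbb{F}_{q^6}$-linear combination of $\lambda$ and $\lambda^q$ alone.

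Next, combining $F_1^{(0)}=0$ (which gives $u^{q^2}=au+bv+v^q$) with the $q$-th power of $F_2^{(0)}=0$ (which gives $u^{q^2}=c^q u^q+d^q v^q+v^{q^4}$) and with $F_2^{(0)}=0$ itself (which gives $u^q=cu+dv+v^{q^3}$), one can eliminate $u^q$ and $u^{q^2}$. After substituting $v=G(\lambda)$ one obtains an equation of the shape
\[
(a+c^{q+1})\,u=A(\lambda)+B(\lambda)^q,
\]
where $A,B\in\mathbb{F}_{q^6}$ are explicit in $a,b,c,d$. If $a+c^{q+1}\neq 0$, then $u$ is determined by $\lambda\in\mathbb{F}_{q^2}$ and paired with $v=G(\lambda)$ we obtain at most $q^2$ solutions of \eqref{eq sistema iniziale}.

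The residual case $a=c^{q+1}$ is what I expect to be the real obstacle. Here the preceding identity collapses to the linear constraint $A(\lambda)+B(\lambda)^q=0$, which generically reduces $\lambda$ to a proper $\mathbb{F}_q$-subspace of $\mathbb{F}_{q^2}$ and, in the totally degenerate situation $A=B=0$, gives no restriction at all. I would then feed $a=c^{q+1}$ back into further equations, say $F_1^{(1)}$, $F_1^{(3)}$ and $F_2^{(2)}$, regarded after substitution as linear in $\lambda,\lambda^q$ (and possibly $u$); the claim follows as soon as one of their $\lambda^q$-coefficients is nonzero. The final step is to show that the simultaneous vanishing of all of them, combined with $\alpha=0$ and $a=c^{q+1}$, forces, via a resultant computation in $b,c,d$ parallel to the computation of $\mathrm{Res}(P_\mu,Q_\mu,\mu^q)$ in the scattered proof, either an element to land in $\mathbb{F}_{16}\cap\mathbb{F}_{q^2}=\mathbb{F}_4$ — contradicting $q=2^h$ with $h$ odd — or $b\in\mathbb{F}_{q^2}$ or $d\in\mathbb{F}_{q^2}$, contradicting $\beta\neq 0$. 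As in the earlier subcases I expect a sum of Frobenius conjugates of the critical coefficient to factor cleanly through $\alpha$, $\beta$ or $a+c^{q+1}$, which is what will make the bookkeeping manageable.
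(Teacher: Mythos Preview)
Your opening moves coincide with the paper's: use $\alpha=0$ together with $a\notin\mathbb{F}_{q^2}$ to write $\mu=\dfrac{(c^{q^2}+c^{q^4})}{(a^{q^2}+a^{q^4})}\lambda$, then $\beta\neq 0$ to write $v=G(\lambda)$, and finally $F_2^{(0)}=0$ to record $u^q=H(\lambda,u)=cu+dG(\lambda)+G(\lambda)^{q^3}$. From there the paper finishes in a single stroke: it substitutes directly into $F_1^{(0)}(u,G(\lambda))$, clears denominators, and reads off the coefficient of $u$ as $a\,(a+a^{q^2})^{q^3+q^2}\beta^{q+1}$, which is nonzero because $a\notin\mathbb{F}_{q^2}$ and $\beta\neq 0$. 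Hence $u$ is expressed in terms of $\lambda\in\mathbb{F}_{q^2}$ and the bound $q^2$ follows immediately; no residual subcase is ever entered.

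Your route deviates at the elimination step: by also iterating $H$ to kill $u^{q^2}$, you obtain the coefficient $a+c^{q+1}$ instead, and this can vanish. So you manufacture a residual case $a=c^{q+1}$ that the paper's shorter argument does not meet. More importantly, everything you write about that residual case is a programme rather than a proof: ``I would then feed $a=c^{q+1}$ back into further equations \ldots'', ``the claim follows as soon as one of their $\lambda^q$-coefficients is nonzero'', ``I expect a sum of Frobenius conjugates \ldots\ to factor cleanly''. No coefficient is actually computed, no resultant is exhibited, and no contradiction is derived. The genuine gap in your proposal is precisely the entire treatment of $a=c^{q+1}$; until those computations are carried out and shown to terminate in one of the advertised contradictions, the argument is incomplete.
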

\begin{proof}
From Equation \eqref{eqacu} it follows $$
\mu = \frac{(c\qq+c\qqqq)\lambda}{a\qq+a\qqqq},$$ and thus, by Equation \eqref{eqbdv},
   \begin{equation*}
    v=G(\lambda ):= \left((d\qq+d\qqqq)   \lambda + (b\qq+b\qqqq)  \frac{(c\qq+c\qqqq) \lambda}{a\qq+a\qqqq}\right)/\left(b d\qq +  b  d\qqqq +  b\qq  d +  b\qq d\qqqq +  b\qqqq  d +  b\qqqq  d\qq\right).
    \end{equation*}
By $F_2^{(0)}=0$, 
$$u\q=cu + dv  + v\qqq=dG(\lambda) + cu + (G(\lambda))^{q^3}=: H(\lambda,u),$$
and thus in $F_1^{(0)}(u,G(\lambda))=0$, after clearing the denominators, the coefficient of $u$ is 
$a(a+a\qq)^{q^3+q^2}\beta^{q+1}$, which is nonzero.
Therefore, $u$ can be written in terms of $\lambda \in \mathbb{F}_{q^2}$ and thus System \eqref{eq sistema iniziale} has at most $q^2$ solutions. 
\end{proof}
\begin{comment}
    alfa0betanon0a,cnoninfq2.txt
\end{comment}

  \subsubsection{$\alpha=0=\beta,$ $\gamma=0$}\label{sec4}
  We want to prove that $2$-dimensional subspaces of this type are contained in a $3$-dimensional subspace fixed by $x\longrightarrow x\qq$. Note that if $a,b\in\F_{q^2}$ or $c,d\in\F_{q^2}$ then respectively $F_1^{(0)}(u,v)=0$ or $F_2^{(0)}(u,v)=0$ is the subspace we are looking for. 

  \textbf{- Case }$c\notin\F_{q^2}$. Let $\lambda:=(a+a\qq)/(c+c\qq)$. From $\alpha=0$ we obtain $\lambda\in\F_{q^2}$. Then 
  \begin{eqnarray*}
      F_{\lambda}(u,v):&=&F_1^{(0)}(u,v)+F_1^{(2)}(u,v)+\lambda(F_2^{(0)}(u,v)+F_2^{(2)}(u,v))\\&=&(a+c\lambda)u+\lambda u^q+(a\qq+c\qq\lambda)u\qq+\lambda u\qqq+(b+d\lambda)v+(\lambda+1)v\q+(b\qq+d\qq\lambda)v\qq+v\qqq.
  \end{eqnarray*}
  From the definition on $\lambda$ we have $(a+c\lambda)\in\F_{q^2}$. If $d\notin\F_{q^2}$ then from $\beta=0$ and $\gamma=0$ we obtain $\lambda=(b+b\qq)/(d+d\qq)$, and so $(b+d\lambda)\in\F_{q^2}$.
  If $d\in\F_{q^2}$ from $\gamma=0$ we obtain $b\in\F_{q^2}$, and so $(b+d\lambda)\in\F_{q^2}$.
  Hence, in all the  cases $F_{\lambda}(u,v)=0$ defines a $3$-dimensional subspace fixed by $x\longrightarrow x\qq$.

\textbf{- Case }$c\in\F_{q^2}$. We have  $d\notin\F_{q^2}$ and so from $\gamma=0$ we obtain $a\in\F_{q^2}$. Let $\lambda:=(b+b\qq)/(d+d\qq)$ and analogously to the previous case we obtain that $F_{\lambda}(u,v)=0$ defines a $3$-dimensional subspace fixed by $x\longrightarrow x\qq$.
\vspace{0.5cm}

 Note that our objective in this subsection is to establish that every $2$-dimensional subspace $W$ has a weight of at most $3$ in $U$. Specifically, if $W$ is stabilized by the map $x\longrightarrow x\qq$, then it possesses an even weight, thus at most 2. Conversely, if $W$ is not stabilized by $x\longrightarrow x\qq$, we have just demonstrated the existence of a $3$-dimensional subspace containing $W$, which is stabilized by $x\longrightarrow x\qq$. Consequently, we can apply Theorem \ref{thm3to2}.

  \begin{thm}
    If $\alpha=\beta=\gamma= 0$, and one among $a,b,c,d$ is not in $\F_{q^2}$ then System \eqref{eq sistema iniziale} has at most $q^2$ solutions.
\end{thm}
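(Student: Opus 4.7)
The plan is to combine the explicit $3$-dimensional fixed super-subspace constructed in the preceding text with the dichotomy already established, and to observe that $W$ itself cannot be stable under $x\mapsto x^{q^2}$ under the present hypotheses.

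First I would translate the statement into a weight bound: the $(u,v)$-solutions of System~\eqref{eq sistema iniziale} are in bijection with the elements of $U\cap W$, where $W$ is the $2$-dimensional $\F_{q^6}$-subspace of $V(4,q^6)$ of type \eqref{retta4} determined by $a,b,c,d$. Hence ``at most $q^2$ solutions'' is equivalent to $\dim_q(U\cap W)\leq 2$.

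Second I would observe directly that $W$ is \emph{not} fixed by $x\mapsto x^{q^2}$ under the present hypotheses. Parametrizing $W$ by $(u,v)\mapsto(u,v,au+bv,cu+dv)$, the requirement that $(u^{q^2},v^{q^2},(au+bv)^{q^2},(cu+dv)^{q^2})$ belong to $W$ for all $u,v\in\F_{q^6}$ forces, after comparing the last two coordinates, the four identities $a=a^{q^2}$, $b=b^{q^2}$, $c=c^{q^2}$, $d=d^{q^2}$, i.e.\ $a,b,c,d\in\F_{q^2}$. The standing hypothesis of the theorem excludes this, so $W$ is non-fixed.

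Third I would invoke the construction carried out immediately above the theorem statement: in each of the sub-cases $c\notin\F_{q^2}$ and $c\in\F_{q^2}$ an explicit $\F_{q^2}$-linear combination of $F_1^{(0)}+F_1^{(2)}$ and $F_2^{(0)}+F_2^{(2)}$ (namely the polynomial $F_\lambda$) yields a $3$-dimensional $\F_{q^6}$-subspace $M\supset W$ stable under $x\mapsto x^{q^2}$. Applying Theorem~\ref{thm3to2} to the pair $W\subset M$ with $W$ non-fixed and $M$ fixed then immediately yields $\dim_q(U\cap W)\leq 2$, giving at most $q^2$ solutions.

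The main obstacle is ensuring that $W$ is necessarily non-fixed; this is resolved by the elementary coordinate computation in the second step, which reduces $\sigma$-stability of $W$ to all four coefficients lying in $\F_{q^2}$. Once $W$ is known to be non-fixed, the result is an immediate appeal to the previously proved Theorem~\ref{thm3to2}, with no further case analysis or ad-hoc computation required.
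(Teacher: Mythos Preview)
Your proposal is correct and is in fact shorter than the paper's own argument. The paper opens its proof with the same dichotomy you use: if $W$ is not fixed by $x\mapsto x^{q^2}$, then the $3$-dimensional $\sigma^2$-stable super-subspace $M\supset W$ built in the text preceding the theorem, together with Theorem~\ref{thm3to2}, gives weight at most $2$; otherwise $W$ is fixed. The paper then devotes Cases~1--4 to the ``$W$ fixed'' situation, deriving in each case (from the extra equation expressing $\sigma^2(p)\in W$) an explicit linear relation between $u$ and $v$, bounding the number of solutions by $q^3$ (Cases~1--2) or $q^2$ (Cases~3--4), and in the former two invoking Lemma~\ref{lemma} to drop from $q^3$ to $q^2$.

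Your second step shows that, for $W$ in the parametrized form \eqref{retta4}, stability under $x\mapsto x^{q^2}$ forces $a,b,c,d\in\F_{q^2}$; hence under the standing hypothesis the ``$W$ fixed'' branch is empty and the paper's four cases are treating a vacuous situation. What your approach buys is brevity: one elementary coordinate check replaces four computational cases. The paper's explicit case analysis gains nothing extra here, though it does yield self-contained bounds that do not pass through $M$ and Theorem~\ref{thm3to2}.
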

\begin{proof}
Observe that if the $2$-dimensional subspace $W$ defined by $F_1^{(0)}(u,v)=F_2^{(0)}(u,v)=0$ is not fixed by $x\longrightarrow x\qq$ we can apply Theorem \ref{thm3to2}. So we can consider $W$ fixed by $x\longrightarrow x\qq$.
Let us divide the proof in several cases.

\textbf{Case 1.} $a\notin\F_{q^2}$:
we have
\begin{equation*}
    \begin{cases}
        au+bv+u\qq+v\q=0\\
        au\qq+bv\qq+u+u\qq+v\qqq=0\\
        a\qq u\qq+b\qq v\qq+u+u\qq+v\qqq=0\\
        cu+dv+u\q+v\qqq=0.
    \end{cases}
\end{equation*}
Summing the second and third equation we obtain 
\begin{equation*}
    u=\frac{b+b\qqqq}{a+a\qqqq}v,
\end{equation*}
and substituting it into the fourth equation we obtain a bound on the solutions $(u,v)$ of $q^3$. Using Lemma \ref{lemma} the claim follows.

\textbf{Case 2.} $b\notin\F_{q^2}$:
we have
\begin{equation*}
    \begin{cases}
        au+bv+u\qq+v\q=0\\
        au\qq+bv\qq+u+u\qq+v\qqq=0\\
        a\qq u\qq+b\qq v\qq+u+u\qq+v\qqq=0\\
        cu+dv+u\q+v\qqq=0.
    \end{cases}
\end{equation*}
Analogously to the previous case we obtain
\begin{equation*}
    v=\frac{a+a\qqqq}{b+b\qqqq}u,
\end{equation*}
and substituting it into the fourth equation we obtain a bound on the solutions $(u,v)$ of $q^3$. Using Lemma \ref{lemma} the claim follows.

\textbf{Case 3.} $c\notin\F_{q^2}$:
we have
\begin{equation*}
    \begin{cases}
        cu+dv+u\q+v\qqq=0\\
        cu\qq+dv\qq+u\qqq+v\q v\qqq=0\\
        c\qq u\qq+d\qq v\qq+u\qqq+v\q v\qqq=0\\
        au+bv+u\qq+v\q=0.
    \end{cases}
\end{equation*}
Summing the second and third equation we obtain 
\begin{equation*}
    u=\frac{d+d\qqqq}{c+c\qqqq}v,
\end{equation*}
and substituting it into the fourth equation we obtain a bound on the solutions $(u,v)$ of $q^2$.

\textbf{Case 4.} $d\notin\F_{q^2}$:
we have
\begin{equation*}
    \begin{cases}
        cu+dv+u\q+v\qqq=0\\
        cu\qq+dv\qq+u\qqq+v\q v\qqq=0\\
        c\qq u\qq+d\qq v\qq+u\qqq+v\q v\qqq=0\\
        au+bv+u\qq+v\q=0.
    \end{cases}
\end{equation*}
Analogously to the previous case we obtain
\begin{equation*}
    v=\frac{c+c\qqqq}{d+d\qqqq}u,
\end{equation*}
and substituting it into the fourth equation we obtain a bound on the solutions $(u,v)$ of $q^2$.

\end{proof}

 \begin{thm}
    If $\alpha=\beta=\gamma= 0$, and $a,b,c,d\in\F_{q^2}$ then System \eqref{eq sistema iniziale} has at most $q^2$ solutions.
\end{thm}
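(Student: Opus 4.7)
The plan is to combine the invariance results of Section~2 with a direct linear-algebra computation. First, the assumption $a,b,c,d\in\F_{q^2}$ implies that both defining equations of $W$, namely $aU+bV+W=0$ and $cU+dV+T=0$, have coefficients in the fixed field of $\phi:x\mapsto x^{q^2}$, so $W$ itself is $\phi$-invariant. Lemma~\ref{lemma} then gives that $\dim_q(U\cap W)$ is even. Moreover, $W$ is contained in the $\phi$-invariant $3$-dimensional subspace $M=\{aU+bV+W=0\}$, so the Corollary yields $\dim_q(U\cap W)\leq\dim_q(U\cap M)\leq 4$. Consequently $\dim_q(U\cap W)\in\{0,2,4\}$, and the statement is equivalent to ruling out $\dim_q(U\cap W)=4$.

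To rule out dimension $4$, I would set up coordinates adapted to the $\phi$-action. Since $q=2^h$ with $h$ odd, $\F_4\subset\F_{q^2}$ contains a primitive cube root of unity $\omega$. The restriction $\phi|_T$ has order $3$ (as $T\cap\F_{q^2}=\{0\}$), with minimal polynomial $x^2+x+1$, which splits over $\F_{q^2}$ as $(x+\omega)(x+\omega^2)$. Choosing a basis $\{e,e^q\}$ of $T$ over $\F_{q^2}$ with $e^{q^2}=\omega e$ (so $(e^q)^{q^2}=\omega^2 e^q$), and writing $u=u_1e+u_2e^q$, $v=v_1e+v_2e^q$ with $u_i,v_i\in\F_{q^2}$, the equations $F_1^{(0)}=0$ and $F_2^{(0)}=0$ of System~\eqref{eq sistema iniziale} decompose into four $\F_{q^2}$-equations in $(u_1,u_2,v_1,v_2)$ (one pair along $e$ and one along $e^q$). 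Solving the $e$-component of $F_1^{(0)}=0$ for $v_2^q$ as an $\F_{q^2}$-linear combination of $u_1$ and $v_1$, substituting into the remaining three equations, and applying $\phi$ to the $e$-component of $F_2^{(0)}=0$ reduces the system to a $3\times 3$ $\F_{q^2}$-linear system in the unknowns $(u_2,u_1^q,v_1^q)$ with coefficient matrix $M$ whose entries are polynomials in $a,b,c,d$ and $\omega$.

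The $\F_q$-dimension of the original solution space equals $2\bigl(3-\operatorname{rank}_{\F_{q^2}}M\bigr)$, so $\dim_q(U\cap W)\leq 2$ is equivalent to $\operatorname{rank}_{\F_{q^2}}M\geq 2$, i.e., at least one $2\times 2$ minor of $M$ must be nonzero for every $a,b,c,d\in\F_{q^2}$. This is the main technical obstacle. I would establish it by explicit expansion of the nine $2\times 2$ minors as polynomials in $a,b,c,d$ and a resultant or elimination argument showing they cannot vanish simultaneously, in the spirit of the polynomial non-vanishing computations already used to prove that $U$ is scattered and in the preceding theorems of Section~3. Degenerate subcases, in which a leading entry of $M$ vanishes (e.g.\ $a+\omega^2=0$ or $b=0$), would be handled separately by an essentially identical case analysis. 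Once $\operatorname{rank}_{\F_{q^2}}M\geq 2$ is verified, $\dim_q(U\cap W)\leq 2$, equivalently at most $q^2$ solutions, follows.
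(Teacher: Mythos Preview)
Your opening reduction is correct and in the same spirit as the paper: since $a,b,c,d\in\F_{q^2}$, the plane $W$ is $\phi$-invariant, so Lemma~\ref{lemma} forces the weight to be even, and it remains to exclude weight $4$ (equivalently, to get a bound of $q^3$ and invoke parity). Your eigenspace decomposition of $T$ via an element $e$ with $e^{q^2}=\omega e$ is a genuinely different and conceptually appealing set-up. However, there is a real gap: the entire content of the theorem is the non-vanishing computation you explicitly defer with ``I would establish it by explicit expansion\ldots and a resultant or elimination argument''. Reducing to ``some $2\times2$ minor of an explicit $3\times3$ matrix in $a,b,c,d,a^q,b^q,c^q,d^q,\omega$ is nonzero for every $a,b,c,d\in\F_{q^2}$'' is not a simplification of the problem but a restatement of it, of the same order of difficulty as what the paper actually carries out. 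There is also a small slip in your recipe: solving the $e$-component of $F_1^{(0)}$ for $v_2^q$ and applying $\phi$ to the $e$-component of $F_2^{(0)}$ does not yield a system in $(u_2,u_1^q,v_1^q)$; one must raise the $e$-components to the $q$-th power (not the $q^2$-th) to land in the correct variable set. This is fixable, but it shows the reduction has not been checked.

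For comparison, the paper works directly: it takes Frobenius twists of $F_1^{(0)},F_2^{(0)}$, uses three of them to eliminate $u^{q},u^{q^2},u^{q^3}$, and obtains four equations linear in $u,v,v^q,v^{q^2},v^{q^3}$. It then splits on whether $a^2+a+1=0$. In the main case it solves for $u$, substitutes, and writes down two explicit cubic-in-$v$ relations; assuming all eight coefficients vanish leads, via the relation $c^q=(a^2c^q+ac^q+a+c^q)/c^q$ and the constraint $\F_{q^2}\cap\F_4=\F_2$, to a contradiction, so one coefficient survives and at most $q^3$ solutions exist---hence at most $q^2$ by parity. The degenerate cases $a=\omega,\omega^2$ are handled by a short linear combination of two of the four equations. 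This explicit case analysis is exactly the work your proposal promises but does not deliver; without it, what you have is a plan rather than a proof.
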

 \begin{proof}
    Consider 
    \begin{equation*}
        \begin{cases}
        au+bv+u\qq+v\q=0\\
        cu+dv+u\q+v\qqq=0\\
        a^qu^q+b^qv^q+u^{q^3} +v^{q^2}=0\\ 
        c^qu^q+d^qv^q+u^{q^2}+v+v^{q^2}=0\\        
        au^{q^2}+bv^{q^2}+u+u^{q^2} +v^{q^3}=0\\ 
        cu^{q^2}+dv^{q^2}+u^{q^3}+v^q+v^{q^3}=0\\
        a^{q}u^{q^3}+b^{q}v^{q^3}+u^q+u^{q^3} +v+v^{q^2}=0.
        \end{cases}
    \end{equation*}
    From the third, second and first equations, we derive respectively $u\qqq,u\q,u\qq$ which we substitute into the others, obtaining
    \begin{equation*}
        \begin{cases}
        a u + b v + c^{q+1} u + c\q d v + c\q v\qqq + d\q v\q + v + v\q + v\qq=0\\
    a^2 u + a b v + a u + a v\q + b v + b v\qq + u + v\q + v\qqq=0\\
    a c u + a\q c u + a\q d v + a\q v\qqq + b c v + b\q v\q + c v\q + d v\qq + v\q + v\qq + v\qqq=0\\
    a ^{2q} c u + a ^{2q} d v + a ^{2q} v\qqq + a\q b\q v\q + a\q c u + a\q d v + a\q v\qq + a\q v\qqq + b\q v\q + b\q v\qqq + c u + d v +
        v + v\qqq=0.
        \end{cases}
    \end{equation*}
We divide the proof in several cases.

\textbf{-Case }$a^2+a+1\neq 0$

From the second equation we derive $u$, and substituting in the first and third we obtain
\begin{eqnarray}
    (a^2 c\q d + a^2 + a b  c^{q+1} + a c\q d + a + b  c^{q+1} + b + c\q d + 1)v+(
a^2 d\q + a  c^{q+1} + a d\q +  c^{q+1} + d\q + 1)v\q+\nonumber\\\label{tt4}(a^2 + a b + a + b  c^{q+1} + 1)v\qq+(a^2 c\q + a c\q + a +  c^{q+1} + c\q)v\qqq=0,
\end{eqnarray}
\begin{eqnarray}
    (a^{q+2}d + a^{q+1} bc + a^{q+1}  d + a\q  bc + a\q d + bc )v+(a^2b\q  + a^2 + a^{q+1} c + ab\q  + a + a\q c + b\q  + c + 1 )v\q+\nonumber\\\label{tt6}(a^2d + a^2 + abc + ad + a + a\q  bc + d + 1 )v\qq+(a^{q+2} + a^2 + a^{q+1}  + ac + a + a\q  c + a\q  + 1 )v\qqq=0.
\end{eqnarray}

Consider that if any of these coefficients is non-zero, then one of these equations would impose a limit of $q^3$ solutions $(u,v)$, as per the preceding observation.

Let us assume for the sake of contradiction that all of these coefficients vanish. It is worth noting that $c^{q+1}\neq a$, as otherwise $a^2 + ab + a + bc^{q+1} + 1\neq 0$. Similarly, observe that $c\neq 0$, as otherwise $a^2c^{q}+ac^{q}+a+c^{q+1}+c^{q}$ would equal $a$, which contradicts the condition $a^2 + ab + a + bc^{q+1} + 1\neq 0$. Hence, we deduce $c=(a^2c^{q}+ac^{q}+a+c^{q})/c^{q}$. We can substitute $c$ in the coefficient of $v\qqq$ of the polynomial $(\ref{tt6})$
and obtain
\begin{equation*}
    a^3c^{q} +a^2+ a^{q+1} + c^q=0.
\end{equation*}
Note that in this case $a^3=1$ if and only if $a=1$. Substituting $a=1$ in the coefficient of $v\qqq$ in $(\ref{tt4})$, we obtain $c^{q+1}+c^q+1=0$, and raising it to the $q$-power we also have $c^{q+1}+c+1=0$. Summing the two equations we obtain $c\in \F_q$, and so $c^2+c+1=0$, that implies $c\in \F_4\setminus\F_2$, a contradiction observing that $\F_{q^2}\cap\F_4=\F_2$. So let $a\neq 1$. We obtain $c^q=(a^2+a^{q+1})/(a^3+1)$, so also $c=(a^{2q}+a^{q+1})/(a^{3q}+1)$. Substituting it in the coefficient of $v\qqq$ in  $(\ref{tt6})$
we obtain
\begin{equation*}
    a(a^{2q+1}+\omega a^{q+1}+\omega a+\omega a^{2q}+\omega a^q+1)(a^{2q+1}+\omega^2 a^{q+1}+\omega^2 a+\omega^2 a^{2q}+\omega^2 a^q+1)=0.
\end{equation*}
 Obviously $a\neq 0$. Our aim is to prove that the two other factors are nonzero. 
 
 Assume $a^{2q+1}+\omega a^{q+1}+\omega a+\omega a^{2q}+\omega a^q+1=0$. Raising it to the $q$-power one gets $a^{q+2}+\omega^2 a^{q+1}+\omega^2 a\q+\omega^2 a^{2}+\omega^2 a+1=0$. Combining the two equations, we obtain 
 \begin{equation*}
     (a\q+\omega^2)(a+\omega)(a+a\q)=0.
 \end{equation*}
Note that $a\neq \omega,\omega^2$ since $a^2+a+1\neq 0$, so we obtain $a\in\F_q$. Substituting it in $a^{2q+1}+\omega a^{q+1}+\omega a+\omega a^{2q}+\omega a^q+1=0$ we obtain $a^3=1$, a contradiction. 

Assume now $a^{2q+1}+\omega^2 a^{q+1}+\omega^2 a+\omega^2 a^{2q}+\omega^2 a^q+1=0$. Raising it  to the $q$-power also $a^{q+2}+\omega^2 a^{q+1}+\omega^2 a\q+\omega^2 a^{2}+\omega^2 a+1$. Analogously to the previous case we obtain a contradiction, since summing the two we obtain
\begin{equation*}
     (a\q+\omega)(a+\omega^2)(a+a\q)=0.
 \end{equation*}
\textbf{-Case }$a=\omega$
 
The system reads
\begin{equation*}
        \begin{cases}
\omega u + b v + c^{q+1} u + c\q d v + c\q v\qqq + d\q v\q + v + v\q + v\qq=0\\
     \omega^2 b v+ \omega^2 v\q + b v\qq + v\qqq=0\\
    cu+ \omega^2 d v + \omega^2 v\qqq + b c v + b\q v\q + c v\q + d v\qq + v\q + v\qq + v\qqq=0\\
       v+ \omega b\q v\q + \omega^2 v\qq + b\q v\qqq=0.
        \end{cases}
    \end{equation*}
We have
\begin{equation*}
   0=b\q(\omega^2 b v+ \omega^2 v\q + b v\qq + v\qqq)+(v+ \omega b\q v\q + \omega^2 v\qq + b\q v\qqq)=(\omega^2b^{q+1}+1)v+b\q v\q+(b^{q+1} + \omega^2)v\qq.
\end{equation*}
Since this polynomial equation cannot vanish,  we have at most $q^2$ solutions $v$. 
The coefficients of $u$ in the first and third equations are $\omega+c^{q+1}$ and $c$, respectively. Since they cannot both vanish simultaneously, the claim follows.

\textbf{-Case }$a=\omega^2$
 
This case is analogous to the previous one since the system reads
\begin{equation*}
        \begin{cases}
\omega^2 u + b v + c^{q+1} u + c\q d v + c\q v\qqq + d\q v\q + v + v\q + v\qq=0\\
     \omega b v+ \omega v\q + b v\qq + v\qqq=0\\
    cu+ \omega d v + \omega v\qqq + b c v + b\q v\q + c v\q + d v\qq + v\q + v\qq + v\qqq=0\\
       v+ \omega^2 b\q v\q + \omega v\qq + b\q v\qqq=0.
        \end{cases}
    \end{equation*}
     Proceeding as before, we obtain the claim.
    \end{proof}
\subsubsection{$\alpha=0=\beta,$ $\gamma\neq 0$}

\begin{thm}
If $\alpha=\beta=0$ and $\gamma\neq 0$ then System \eqref{eq sistema iniziale} has at most $q^2$ solutions.   
\end{thm}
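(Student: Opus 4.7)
The plan is to exploit $\alpha=\beta=0$ and $\gamma\ne 0$ to force tight linear constraints on $u,u^{q^2}$ and $v,v^{q^2}$, after which the pair $F_1^{(0)}=F_2^{(0)}=0$ should cut the solution set down to the required $q^2$. In the first step I would regard equations \eqref{eqacu}--\eqref{eqbdv} as a homogeneous $2\times 2$ linear system in the pair $(\lambda,\mu)\in\F_{q^2}^2$:
\begin{equation*}
\begin{cases}
(c+c^{q^2})^{q^2}\lambda+(a+a^{q^2})^{q^2}\mu=0,\\
(d+d^{q^2})^{q^2}\lambda+(b+b^{q^2})^{q^2}\mu=0.
\end{cases}
\end{equation*}
In characteristic two the determinant of this system is precisely $\gamma$, so the hypothesis $\gamma\ne 0$ forces $\lambda=\mu=0$. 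Unpacking the two expressions for $\lambda$ and the two for $\mu$ then yields the four relations $(a+a^{q^4})u+(a^{q^2}+a^{q^4})u^{q^2}=0$, $(c+c^{q^4})u+(c^{q^2}+c^{q^4})u^{q^2}=0$, and their analogues in $v,v^{q^2}$ with coefficients in $b$ and $d$.

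Next, I would observe that $\gamma\ne 0$ prevents both $a,c\in\F_{q^2}$ (which would make the first pair trivial) and $b,d\in\F_{q^2}$ (same for the second pair). Hence at least one equation of each pair is nontrivial, and the vanishing of $\alpha$ (respectively $\beta$) is exactly the condition making the two equations of each pair proportional. Each pair therefore collapses to a single relation of the form
\[
u^{q^2}=\eta\,u,\qquad v^{q^2}=\xi\,v,
\]
for explicit $\eta,\xi\in\F_{q^6}$ expressible in terms of $a$ (or $c$) and $b$ (or $d$). Combined with $u+u^{q^2}+u^{q^4}=0$ and $v+v^{q^2}+v^{q^4}=0$, we must have either $u=0$ or $\eta^{q^2+1}+\eta+1=0$, and similarly for $v$ and $\xi$. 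In the nondegenerate case the admissible $u$ forms an $\F_{q^2}$-line (a $2$-dimensional $\F_q$-subspace), and similarly for $v$. Thus a priori $(u,v)$ lives in a set of size at most $q^4$.

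To shave $q^4$ down to $q^2$, I would parameterise $u=s\,u_0$, $v=t\,v_0$ with $s,t\in\F_{q^2}$ for fixed generators $u_0,v_0$ of the two lines. Substituting $u^{q^2}=\eta u$ into $F_1^{(0)}$ gives $v^q=(a+\eta)u+bv$, and applying $x\mapsto x^{q^2}$ to this relation together with $u^{q^2}=\eta u$, $v^{q^2}=\xi v$ isolates
\[
(\xi+b^{q+1})\,v=(a+\eta)^q u^q+b^q(a+\eta)u.
\]
Whenever the coefficient $\xi+b^{q+1}$ is nonzero, $v$ is determined by $(u,u^q)$, and since $u^q$ is determined by $s\in\F_{q^2}$, the total count is at most $q^2$.

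The main obstacle is to handle the residual sub-cases where $\xi+b^{q+1}=0$ (or the analogous pivot from $F_2^{(0)}$ vanishes). In those configurations I expect one must enrich the system with the Frobenius conjugates $F_1^{(1)},F_1^{(3)},F_2^{(1)},F_2^{(3)}$ to extract further $\F_{q^2}$-linear conditions on $(s,t)$, and then rule out the surviving possibilities by direct algebraic identities on $a,b,c,d,\eta,\xi$. As throughout the paper, I expect the assumption $q=2^h$ with $h$ odd, which precludes $\F_4\subset\F_q$, to be essential in eliminating spurious small roots of the low-degree polynomials that arise, so that the degenerate configurations cannot actually be realised.
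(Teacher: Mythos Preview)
Your approach is essentially the same as the paper's: use $\gamma\neq 0$ to force $\lambda=\mu=0$, deduce $u^{q^2}=\eta u$ and $v^{q^2}=\xi v$ from the resulting four relations (this is exactly what the paper does, splitting according to which of $a,b,c,d$ lie in $\F_{q^2}$ to choose the explicit $\eta,\xi$), and then cut down from $q^4$ to $q^2$ by substituting into $F_1^{(0)}$ and $F_2^{(0)}$.

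Where your sketch diverges is in the endgame, and there you overestimate the difficulty. The paper pivots on the coefficient of $u$ in $F_1^{(0)}$ after the substitution $u^{q^2}=\eta u$: if that coefficient is nonzero, $u$ is determined by $v,v^q$ and hence by the single parameter in $\F_{q^2}$ controlling $v$, giving at most $q^2$ solutions; if it vanishes, $F_1^{(0)}$ collapses to $v^q=bv$ (at most $q$ values of $v$), and then $F_2^{(0)}$ reads $u^q+cu=(d+b^{q^2+q+1})v$, giving at most $q$ values of $u$ for each $v$. That is the entire argument---no higher Frobenius conjugates $F_i^{(j)}$ with $j\ge 1$ are needed, and the hypothesis $\F_4\not\subset\F_q$ is \emph{not} invoked in this case.

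Your own residual case $\xi+b^{q+1}=0$ admits the same easy finish. From $(\xi+b^{q+1})v=(a+\eta)^q u^q+b^q(a+\eta)u$ you get $(a+\eta)^q u^q=b^q(a+\eta)u$. If $a+\eta\neq 0$ this pins $u$ to at most $q$ values, and then $v^q+bv=(a+\eta)u$ gives at most $q$ values of $v$ for each $u$; if $a+\eta=0$ then $F_1^{(0)}$ is just $v^q=bv$, and $F_2^{(0)}$ finishes as above. So the ``main obstacle'' you flag is in fact a two-line case split, and your expectation that the $h$ odd assumption is essential here is incorrect.
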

\begin{proof}
First note that the condition $\gamma\neq 0$ implies $\lambda =0 = \mu.$  
Also, any solution $(u,v)$ of System \eqref{eq sistema iniziale} satisfies  
    $$\begin{cases}
    au+a\qq u\qq+a\qqqq u+a\qqqq u\qq=0\\
    bv+b\qq v\qq+b\qqqq v+b\qqqq v\qq=0\\
    cu+c\qq u\qq+c\qqqq u+c\qqqq u\qq=0 \\
    dv+d\qq v\qq+d\qqqq v+d\qqqq v\qq=0
    \end{cases}.$$
We consider two cases. 
\begin{enumerate}
    \item $b$ or $c$ in $\mathbb{F}_{q^2}$. Since $\gamma\neq 0 $, $a,d \notin \mathbb{F}_{q^2}$.
    In this case 
    $$au+a\qq u\qq+a\qqqq u+a\qqqq u\qq=0, \qquad dv+d\qq v\qq+d\qqqq v+d\qqqq v\qq=0,$$
    and thus 
    $$u^{q^2}=\frac{a+a^{q^2}}{a^{q^2}+a^{q^4}}u, \quad v^{q^{2}}=\frac{d+d^{q^2}}{d^{q^2}+d^{q^4}}v,\quad v^{q^{3}}=\frac{d^q+d^{q^3}}{d^{q^3}+d^{q^5}}v^q.$$
    Condition $F_1^{(0)}(u,v)=0$ reads
    $$(a^{q^2+1}+ a^{q^4+1}+ a + a\qqqq)u=a\qq b v + a\qq  v\q + a\qqqq b v + a\qqqq  v\q.$$
    If $a^{q^2+1}+ a^{q^4+1}+ a + a\qqqq\neq 0$, then System \eqref{eq sistema iniziale} has at most $q^2$ solutions. If $a^{q^2+1}+ a^{q^4+1}+ a + a\qqqq=0$ then $a\qq b v + a\qq  v\q + a\qqqq b v + a\qqqq  v\q=0$ which yields $v^{q}=bv$. Thus,
    $$F_2^{(0)}(u,v)=b^{q^2+q+1} v + c u + d v + u\q=0,$$
    again providing at most $q^2$ solutions for System \eqref{eq sistema iniziale}.

    \item $b,c\notin \mathbb{F}_{q^2}$.
     In this case 
    $$bv+b\qq v\qq+b\qqqq v+b\qqqq v\qq=0, \qquad cu+c\qq u\qq+c\qqqq u+c\qqqq u\qq=0.$$
    The argument is the same as in the previous  case. In particular
    $$u^{q^2}=\frac{c+c^{q^2}}{c^{q^2}+c^{q^4}}u, \quad v^{q^{2}}=\frac{b+b^{q^2}}{b^{q^2}+b^{q^4}}v,\quad v^{q^{3}}=\frac{b^q+b^{q^3}}{b^{q^3}+b^{q^5}}v^q.$$
    Condition $F_1^{(0)}(u,v)=0$ reads
    $$(a c\qq + a c\qqqq + c + c\qqqq)u=b c\qq v + b c\qqqq v + c\qq v\q + c\qqqq v\q.$$
    If $a c\qq + a c\qqqq + c + c\qqqq\neq 0$, then System \eqref{eq sistema iniziale} has at most $q^2$ solutions. If $a c\qq + a c\qqqq + c + c\qqqq=0$ then $b c\qq v + b c\qqqq v + c\qq v\q + c\qqqq v\q=0$ which yields $v^{q}=bv$. Thus,
    $$F_2^{(0)}(u,v)=b^{q^2+q+1} v + c u + d v + u\q=0,$$
    again providing at most $q^2$ solutions for System \eqref{eq sistema iniziale}.
    
\end{enumerate}

\end{proof}

 \subsubsection{$\alpha \neq 0 \neq \beta$}

 \begin{thm}
If $\alpha,\beta\neq 0$ then System \eqref{eq sistema iniziale} has at most $q^2$ solutions.
 \end{thm}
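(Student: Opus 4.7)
My plan is to use $\alpha,\beta\ne 0$ to reduce the system to an $\F_{q^6}$-linear $4\times 4$ problem in $(\lambda,\mu,\lambda^q,\mu^q)$ whose only solution is the zero vector. First, with $\alpha\ne 0$ and $\beta\ne 0$, the $2\times 2$ linear systems arising from \eqref{eqacu} and \eqref{eqbdv} can be inverted in $u,v$ to give
\[
u=\frac{(c\qq+c\qqqq)\lambda+(a\qq+a\qqqq)\mu}{\alpha},\qquad v=\frac{(d\qq+d\qqqq)\lambda+(b\qq+b\qqqq)\mu}{\beta},
\]
with $\lambda,\mu\in\F_{q^2}$. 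So every admissible pair $(u,v)$ is determined by $(\lambda,\mu)\in\F_{q^2}^2$, and the solution count is a priori at most $q^4$; the goal is to sharpen this to $q^2$.

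Substituting these formulas into $F_1^{(0)}(u,v)=F_2^{(0)}(u,v)=0$ and reducing with $\lambda^{q^2}=\lambda$, $\mu^{q^2}=\mu$, each of the two equations becomes an $\F_{q^6}$-linear relation
\[
P_{i,1}\lambda+P_{i,2}\mu+P_{i,3}\lambda^q+P_{i,4}\mu^q=0,\qquad i=1,2,
\]
with coefficients $P_{i,j}\in\F_{q^6}$ depending rationally on $a,b,c,d,\alpha,\beta$. In particular, after clearing denominators, the coefficients of $\lambda^q$ and $\mu^q$ in the first relation are proportional to $d^{q^3}+d^{q^5}$ and $b^{q^3}+b^{q^5}$ respectively, and vanish precisely when $d\in\F_{q^2}$ or $b\in\F_{q^2}$. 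Taking the $q$-Frobenius of each of the two relations (and again applying $\lambda^{q^2}=\lambda,\mu^{q^2}=\mu$) yields two further equations, so together these form a $4\times 4$ $\F_{q^6}$-linear system $M\cdot(\lambda,\mu,\lambda^q,\mu^q)^{\top}=0$.

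If $\det(M)\ne 0$, the only $\F_{q^6}$-solution of this homogeneous system is the zero vector; thus $(\lambda,\mu)=(0,0)$ and $(u,v)=(0,0)$, giving a trivial count of $1\le q^2$. If $\det(M)=0$, one of the two original rows still contributes a nontrivial $\F_{q^6}$-linear relation allowing (say) $\mu^q$ to be expressed in terms of $\lambda,\mu,\lambda^q$; imposing Frobenius-consistency ($\lambda,\mu\in\F_{q^2}$) turns this into a constraint forcing $(\lambda,\mu)$ into an $\F_q$-subspace of $\F_{q^2}^2$ of dimension at most $2$, again giving at most $q^2$ solutions. The principal obstacle is the explicit computation of $\det(M)$ and the verification that its possible vanishing collapses to a configuration already excluded (some of $a,b,c,d$ in $\F_{q^2}$, or $\alpha=0$, $\beta=0$, or $\gamma=0$). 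As in the $\alpha\ne 0,\beta=0$ analysis, one expects to split on whether each of $a,b,c,d$ lies in $\F_{q^2}$, clear denominators, and produce a trace-type identity of the shape $L+L^{q^2}+L^{q^4}=\Phi\cdot\alpha^q$ (with $\Phi\ne 0$) that certifies the non-vanishing of the critical coefficient, the final contradiction being furnished, as throughout the paper, by $q=2^h$ with $h$ odd and thus $\F_4\not\subset\F_{q^2}$.
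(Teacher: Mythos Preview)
Your setup is exactly the paper's: invert \eqref{eqacu} and \eqref{eqbdv} to write $u,v$ as $\F_{q^6}$-linear functions of $(\lambda,\mu)\in\F_{q^2}^2$, then feed these into the $F_i^{(j)}$. But your completion is not a proof. The paragraph beginning ``If $\det(M)=0$\dots'' is hand-waving: knowing that one row of $M$ is nonzero lets you eliminate \emph{one} of the four variables $\lambda,\mu,\lambda^q,\mu^q$, but you then invoke an unspecified ``Frobenius-consistency'' step to cut from three free parameters to two, and this step is precisely what has to be argued. Your final sentences explicitly concede that the determinant computation and the exclusion of its vanishing locus are still to be done, and you propose splitting on whether each of $a,b,c,d$ lies in $\F_{q^2}$ --- but this case split is vacuous here, since $\alpha\neq 0$ already forces $a,c\notin\F_{q^2}$ and $\beta\neq 0$ forces $b,d\notin\F_{q^2}$.

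That last observation is exactly what the paper exploits to bypass the $4\times4$ determinant entirely. Once $a,b,c,d\notin\F_{q^2}$, the coefficient of $\lambda^q$ in $F_1^{(0)}$ (after substitution and clearing denominators) is $(d+d^{q^2})^{q^3}\alpha\beta$, which is nonzero; so $\lambda^q$ is determined by $\lambda,\mu,\mu^q$. Substituting this into $F_1^{(2)}$ yields a linear relation in $\lambda,\mu,\mu^q$ whose $\mu^q$-coefficient is $\alpha^2\beta^{q+2}\neq 0$, so $\mu^q$ is determined by $\lambda,\mu$. Two pivots, both manifestly nonzero under the standing hypotheses, and the bound $q^2$ follows immediately --- no determinant, no case analysis, no trace identities, and no appeal to $\F_4\not\subset\F_q$. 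Your outline is salvageable, but the actual argument is much shorter than you anticipate.
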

 \begin{proof}
First, observe that $a,b,c,d\notin \mathbb{F}_{q^2}$, otherwise at least one among $\alpha$ and $\beta$ would vanish.
Also from Equations \eqref{eqacu} and \eqref{eqbdv}, 
\begin{eqnarray*}
    u&=&\
    \frac{ a\qq \mu +  a\qqqq \mu + c\qq \lambda + c\qqqq \lambda}{ a c\qq +  a c\qqqq +  a\qq c +  a\qq c\qqqq +  a\qqqq c +  a\qqqq c\qq},\\
    v&=& \frac{ b\qq \mu +  b\qqqq \mu + d\qq \lambda + d\qqqq \lambda}{ b d\qq +  b d\qqqq +  b\qq d +  b\qq d\qqqq +  b\qqqq d +  b\qqqq d\qq}.
 \end{eqnarray*}
This shows that System \eqref{eq sistema iniziale} has at most $q^4$ solutions. 
Now we can consider the corresponding linear system in $\lambda,\lambda^q,\mu,\mu^q$ arising from System \eqref{eq sistema iniziale}. 
Note that the number of solutions of this linear system (in terms of $\lambda,\lambda^q,\mu,\mu^q$) is precisely the number of solutions (in $u,v$) of System \eqref{eq sistema iniziale}.

After clearing the denominators, the coefficient of $\lambda^q$ in 
$$F_1^{(0)}(u,v)=0$$
is $(d+d^{q^2})^{q^3}\alpha\beta\neq 0$. Thus we can determine $\lambda^q$ in terms of $\lambda,\mu,\mu^q$. By $F_1^{(2)}(u,v)=0$ this gives a linear equation in $\lambda,\mu,\mu^q$ whose coefficient, after clearing the denominators, in $\mu^q$ is $\alpha^2\beta^{q+2}\neq 0$. This shows that the system in  $\lambda,\lambda^q,\mu,\mu^q$, and thus  System \eqref{eq sistema iniziale}, has at most $q^2$ solutions. 
 \end{proof}
 
\subsection{Case $W$ represented by \eqref{retta4}}
 \begin{thm}
     For any choice $a,b_1,b_2,b_3\in\F_{q^6}$ the $2$-dimensional space defined by $$\begin{cases}
    au+v=0\\
    b_0u+b_2w+b_3t=0
\end{cases}$$ has at most weight 2.
 \end{thm}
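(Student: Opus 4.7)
I would substitute $v=au$ from the first equation, which also forces $w=u^{q^2}+a^q u^q$ and $t=u^q+a^{q^3}u^{q^3}$. The second equation then collapses to
\[
P(u)\;:=\;b_0 u+(b_2 a^q+b_3)u^q+b_2 u^{q^2}+b_3 a^{q^3}u^{q^3}=0,
\]
and the membership conditions $u,v\in T:=\ker\Tr_{q^6/q^2}$ become $u\in T$ together with $au\in T$. I then split into two cases according to whether $a\in\F_{q^2}$ or not.

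If $a\notin\F_{q^2}$, the two $\F_{q^2}$-linear functionals $u\mapsto\Tr_{q^6/q^2}(u)$ and $u\mapsto\Tr_{q^6/q^2}(au)$ on the $3$-dimensional $\F_{q^2}$-space $\F_{q^6}$ are $\F_{q^2}$-linearly independent (any proportionality would force $a\in\F_{q^2}$). Their joint kernel $T\cap a^{-1}T$ therefore has $\F_{q^2}$-dimension $1$, i.e.\ $\F_q$-dimension $2$, so there are at most $q^2$ admissible values of $u$ even before imposing $P(u)=0$, and the weight is at most $2$.

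If $a\in\F_{q^2}$, then $au\in T$ is automatic, and the hypothesis $(b_0,b_2,b_3)\neq (0,0,0)$ guarantees that $P$ is a nonzero linearized polynomial of $q$-degree at most $3$, so its $\F_q$-kernel has dimension at most $3$; hence the weight is at most $3$. To sharpen this to $2$ I distinguish two sub-cases. If $W$ is not stable under $x\mapsto x^{q^2}$, the $3$-dimensional subspace $M:=\{au+v=0\}$ is $\sigma^2$-fixed (because $a\in\F_{q^2}$) and contains $W$, so Theorem \ref{thm3to2} yields weight at most $2$. If instead $W$ itself is $\sigma^2$-fixed, Lemma \ref{lemma} forces the weight to be even, which combined with the polynomial bound of $3$ again gives weight at most $2$.

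The main obstacle is the last sub-case, where $a\in\F_{q^2}$ and $W$ is $\sigma^2$-fixed: here the trace condition degenerates (giving only weight $\leq 4$), and Theorem \ref{thm3to2} does not apply since $W$ is fixed, so one really needs the interplay between the $q$-polynomial bound of $3$ and the parity statement of Lemma \ref{lemma} to close the gap.
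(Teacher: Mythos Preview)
Your proof is correct and follows essentially the same strategy as the paper. Both arguments substitute $v=au$, treat $a\notin\F_{q^2}$ and $a\in\F_{q^2}$ separately, and in the latter case combine the degree-$3$ linearized polynomial bound with the containment in the $\sigma^2$-fixed hyperplane $\{v=au\}$. Your handling of the case $a\notin\F_{q^2}$ via the $\F_{q^2}$-linear independence of the two trace functionals is a cleaner phrasing of what the paper does explicitly (deriving $(a+a^{q^4})u+(a^{q^2}+a^{q^4})u^{q^2}=0$ from the two trace conditions), and you are actually more careful than the paper in spelling out the sub-case where $W$ itself is $\sigma^2$-fixed, invoking Lemma~\ref{lemma} in addition to Theorem~\ref{thm3to2}.
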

 \begin{proof}
 Our aim is to prove that the system below has at most $q^2$ solutions $(u,v)$ for any $a,b_0,b_2,b_3\in\F_{q^6}$
$$
\begin{cases}
    au+v=0\\
    b_0u+b_2u\qq+b_2v\q+b_3u\q+b_3v\qqq=0\\
    u+u^{q^2}+u^{q^4}=v+v^{q^2}+v^{q^4}=0.
    \end{cases}$$
This is equivalent to 
$$
\begin{cases}
    v=au\\
    b_0u+b_2u\qq+a\q b_2u\q+b_3u\q+a\qqq b_3u\qqq=0\\
    au+a\qq u^{q^2}+a\qqqq u^{q^4}=0\\
    u+u^{q^2}+u^{q^4}=0.
    \end{cases}$$
    Note that if $a\notin\F_{q^2}$, we obtain 
    $$(a+a\qqqq)u+(a\qq+a\qqqq)u\qq=0.$$
    This limits the number of $u$ to $q^2$, and since $v$ depends on $u$, we obtain the same bound on the solutions $(u, v)$ of the system.

    Now we study the case  $a\in\F_{q^2}$. First, observe that $a=0$ is trivial. So consider $a\in\F_{q^2}^*$. The system reads 
    $$
\begin{cases}
    v=au\\
    b_0u+b_2u\qq+a\q b_2u\q+b_3u\q+a\q b_3u\qqq=0\\
    u+u^{q^2}+u^{q^4}=0.
    \end{cases}$$
In this case, the $2$-dimensional subspace defined by 
$$\begin{cases}
    v=au\\
    b_0u+b_2u\qq+b_2v\q+b_3u\q+b_3v\qqq=0
\end{cases}$$ has at most weight $3$ and is contained in $v-au=0$, that is fixed by $x\longrightarrow x^{q^2}$. So, by Theorem \ref{thm3to2} the claim follows. 
\end{proof}

\section{The Delsarte dual}

In \cite[Section 3]{CsMPZ2019}, another type of duality has been introduced. 	
	Let $U$ be an $n$-dimensional $\F_q$-subspace of a vector space $V=V(k,q^m)$, with $n>k$. By \cite[Theorems 1, 2]{LuPo2004} (see also \cite[Theorem 1]{LuPoPo2002}), there is an embedding of $V$ in $Z=V(n,q^m)$ with $Z=V \oplus \Gamma$ for some $(n-k)$-dimensional $\F_{q^m}$-subspace $\Gamma$ such that
	$U=\langle W,\Gamma\rangle_{\F_{q}}\cap V$, where $W$ is a $n$-dimensional $\F_q$-subspace of $Z$, $\langle W\rangle_{\F_{q^m}}=Z$ and $\Gamma\cap V=W\cap \Gamma=\{{ 0}\}$.
	Then the quotient space $Z/\Gamma$ is isomorphic to $V$ and under this isomorphism $U$ is the image of the $\F_q$-subspace $W+\Gamma$ of $Z /\Gamma$.
	Now, let $\beta'\colon W\times W\rightarrow\F_{q}$ be a non-degenerate bilinear form on $W$. Then $\beta'$ can be extended to a non-degenerate bilinear form $\beta\colon Z\times Z\rightarrow\F_{q^m}$.
	Let $\perp$ and $\perp'$ be the orthogonal complement maps defined by $\beta$ and $\beta'$ on the lattice of $\F_{q^m}$-subspaces of $Z$ and of $\F_q$-subspaces of $W$, respectively.
	The $k$-dimensional $\F_q$-subspace $W+\Gamma^{\perp}$ of the quotient space $Z/\Gamma^{\perp}$  will be denoted by $\bar U$ and we call it the \textit{Delsarte dual}{} of $U$ with respect to $\beta'$. By \cite[Remark 3.7]{CsMPZ2019}, up to $\mathrm{GL}(n,q)$-equivalence,  the Delsarte dual of an $n$-dimensional $\F_q$-subspace does not depend on the choice of the non-degenerate bilinear form on $W$. By \cite[Theorem 3.3]{CsMPZ2019}, the Delsarte dual of a maximum $2$-scattered of a vector space $V=V(4,q^6)$ is a maximum $2$-scattered subspace of a vector space isomorphic to $V$.

	\begin{prop}
	The $\F_q$-subspace $U$ and its Delsarte dual $\bar U$ are $\mathrm{GL}(4,q^6)$-equivalent
	\end{prop}
	\begin{proof}
	Let $U=U_1$ be the maximum $2$-scattered of $V=\F_{q^6}^4$, $q=2^{2h+1}$, defined in \eqref{formU}.
	Using the notations above we can embed $V$ in $Z=\F_{q^6}^8$ in such a way that
	\begin{align*}
	    &V=\left\{\left(Y_0,Y_1,Y_2,Y_3,0,0,0,0\right): Y_0,Y_1,Y_2,Y_4\in\F_{q^4}\right\},\\
	    &W=\left\{(x,y,x^q,y^q,x^{q^2},y^{q^2},x^{q^3},y^{q^3}): x,y\in\F_{q^6},\ \Tr_{q^6/q^2}(x)=\Tr_{q^6/q^2}(y)=0\right\},\\
	    &\Gamma=\left\{\left(0,0,X_2,X_3,X_3,X_5,X_6,X_2\right): X_2,X_3,X_5,X_6\in\F_{q^4}\right\}.
	\end{align*}

Note that $W$ is an $8$-dimensional $\F_q$-subspace with $\langle W\rangle_{\F_{q^6}}=Z$. Indeed  $W=W_1\oplus W_2$, where
\begin{align*}
    &W_1=\left\{(x,0,x^q,0,x^{q^2},0,x^{q^3},0): x\in\F_{q^6},\ \Tr_{q^6/q^2}(x)=0\right\},\\
    &W_2=\left\{(0,y,0,y^q,0,y^{q^2},0,y^{q^3}): y\in\F_{q^6},\ \Tr_{q^6/q^2}(y)=0\right\}.
\end{align*}
Also, $\dim_{\F_{q^6}} \langle W_1\rangle=\dim_{\F_{q^6}}\langle W_2\rangle=4$. This is a consequence of the fact that the set $T=\{x\in\F_{q^6}\colon \Tr_{q^6/q^2}(x)=0\}$ is a $4$-dimensional $\F_q$-subspace of $\F_{q^6}$. So let $\{t_1,t_2,t_3,t_4\}$ be an $\F_q$-basis of it. The vectors
\[\left\{(t_1,0,t_1^q,0,t_1^{q^2},0,t_1^{q^3},0),(t_2,0,t_2^q,0,t_2^{q^2},0,t_2^{q^3},0), (t_3,0,t_3^q,0,t_3^{q^2},0,t_3^{q^3},0), (t_4,0,t_4^q,0,t_4^{q^2},0,t_4^{q^3},0)\right\}\] and 
\[\left\{(0,t_1,0,t_1^q,0,t_1^{q^2},0,t_1^{q^3}),(0,t_2,0,t_2^q,0,t_2^{q^2},0,t_2^{q^3}), (0,t_3,0,t_3^q,0,t_3^{q^2},0,t_3^{q^3}), (0,t_4,0,t_4^q,0,t_4^{q^2},0,t_4^{q^3})\right\}\] 
are $\F_{q^6}$-independent.

In fact, the rank of the four vectors 
$$(t_1,0,t_1^q,0,t_1^{q^2},0,t_1^{q^3},0),(t_2,0,t_2^q,0,t_2^{q^2},0,t_2^{q^3},0), (t_3,0,t_3^q,0,t_3^{q^2},0,t_3^{q^3},0), (t_4,0,t_4^q,0,t_4^{q^2},0,t_4^{q^3},0)$$ is four if and only if the determinant of 
$$
M(t_1,t_2,t_3,t_4):=\begin{pmatrix}
t_1&t_1^q&t_1^{q^2}&t_1^{q^3}\\
t_2&t_2^q&t_2^{q^2}&t_2^{q^3}\\
t_3&t_3^q&t_3^{q^2}&t_3^{q^3}\\
t_4&t_4^q&t_4^{q^2}&t_4^{q^3}\\
\end{pmatrix}
$$
does not vanish.
The matrix above is a so-called square Moore matrix, the $q$-analog of the Vandermonde matrix introduced by Moore \cite{Moore}. It is well known that $\det(M(t_1,t_2,t_3,t_4))=0$ if and only if $t_1,t_2,t_3,t_4$ are $\mathbb{F}_q$-linearly dependent. Since $\left\{t_1,t_2,t_3,t_4\right\}$ are $\F_q$-independent, $\dim_{\F_{q^6}} \langle W_1\rangle=\dim_{\F_{q^6}}\langle W_2\rangle=4$.
		Straightforward computations show that $U$ is $\mathrm{GL}(4,q^6)$ equivalent to 
		\[\langle W,\Gamma\rangle_{\F_{q}}\cap V=\left\{(x,y,x^q+y^{q^3},y^q+x^{q^2},0,0,0,0): x,y\in\F_{q^6},\ \Tr_{q^6/q^2}(x)=\Tr_{q^6/q^2}(y)=0\right\}.\]

    Let $({\bf x},{\bf y}):=(x,y,x^q,y^q,x^{q^2},y^{q^2},x^{q^3},y^{q^3})$, with $\Tr_{q^6/q^2}(x)=\Tr_{q^6/q^2}(y)=0$, and  consider the bilinear form $\beta'$ on $W$ defined as
	$$\beta'(({\bf x},{\bf y}),({\bf u},{\bf v}))=\Tr_{q^6/q}(xv-uy).$$
	Then $\beta'$ can be extended to the non-degenerate bilinear form $\beta$ of $Z$ defined as:
	
	\[\beta\left({\mathbf X}, {\mathbf Y}\right)=X_0Y_4+X_4Y_0+X_1Y_5+X_5Y_1+X_2Y_6+X_6Y_2+X_3Y_7+X_7Y_3,\]
	where ${\mathbf X}=(X_0,X_1,X_2,X_3,X_4,X_5,X_6,X_7)$ and ${\mathbf Y}=(Y_0,Y_1,Y_2,Y_3,Y_4,Y_5,Y_6,Y_7)$.
	Then 
	$$\Gamma^\perp=\left\{\left(Z_0,0,0,Z_3,Z_4,Z_5,Z_3,Z_0\right): Z_0,Z_3,Z_4,Z_5\in\F_{q^6}\right\}.$$
	Hence, the Delsarte dual $\bar U=\langle W,\Gamma^\perp\rangle_{\F_{q}}\cap \Delta$, where $$\Delta=\left\{\left(X_0,X_1,X_2,X_3,0,0,0,0\right): X_0,X_1,X_2,X_3\in\F_{q^6}\right\},$$ turns out to be
	\begin{align*}
	  \bar U&=\left\{(x+y^{q^3},y,x^q,y^{q}+x^{q^3},0,0,0,0): x,y\in\F_{q^6}, \Tr_{q^6/q^2}(x)=\Tr_{q^6/q^2}(y)=0\right\}\\
	  &=\left\{(z^q+z^{q^3}+t^{q^3},t,z,t^q+z^{q^2},0,0,0,0): t,z\in\F_{q^6}, \Tr_{q^6/q^2}(z)=\Tr_{q^6/q^2}(t)=0\right\}.
	\end{align*}
    Since
    \[\begin{pmatrix}
1 & 1 & 1 & 0\\
1 & 1 & 0 & 1\\
1 & 1 & 1 & 1\\
1 & 0 & 1 & 0
\end{pmatrix}
\begin{pmatrix}
x\\
y\\
x^{q^2}+y^{q}\\
x^{q}+y^{q^3}
\end{pmatrix}=
\begin{pmatrix}
z\\
t\\
z^{q^{2}}+t^{q}\\
z^{q}+z^{q^3}+t^{q^{3}}
\end{pmatrix},
\]
with $\Tr_{q^6/q^2}(x)=\Tr_{q^6/q^2}(y)=\Tr_{q^6/q^2}(z)=\Tr_{q^6/q^2}(t)=0$, the proof is now complete.
\end{proof}

\section{Rank-metric codes}

{\it Rank-metric} codes were originally introduced by Delsarte in the late $70$'s \cite{Delsarte}, and then resumed a few years later by Gabidulin in \cite{Gabidulin}.

Let $m,n \in \mathbb{N}$ be two positive integers such that $m,n \geq 2$, and let $q$ be a prime power. Let $\F_{q^m}^n$ be the vector space of dimension $n$ over the Galois field $\F_{q^m}$. 
Let consider $v=(v_1,v_2,...,v_n) \in \F_{q^m}^n$, the \textit{rank weight} of $v$ is defined as
$$\omega_{\mathrm{rk}}(v)=\dim \langle v_1,v_2,\ldots,v_n\rangle_{\F_q}.$$
A rank-metric code ${\cal C} \subseteq \F_{q^m}^n$ of {\it length} $n$, is a subset of $\F_{q^m}^n$ considered as a vector space endowed with the metric defined by the map $$d(v,w)= \omega_{rk}(v-w),$$
where $v$ and $w \in \F_{q^m}^n.$ Elements of ${\cal C}$ are called {\it codewords}. A {\it linear} rank-metric code ${\cal C}$ is an $\F_{q^m}$-subspace of $\F_{q^m}^n$ endowed with
the rank metric. If ${\cal C} \subseteq \F_{q^m}^n$ is a linear rank-metric code, the minimum distance between two distinct codewords of ${\cal C}$ is $$d=d({\cal C})=\min \{\omega_{rk}(v) \, | \, v \in \cal C \setminus \{{\bf 0}\}\}.$$ 

\noindent A linear rank-metric code ${\cal C} \subseteq \F_{q^m}^n$ of length $n$, dimension $k$ and minimum distance $d$, is referred in the literature as to an $[n,k,d]_{q^m/q}$ code, or as to an $[n,k]_{q^m/q}$ code, depending on whether the minimum distance is known or not.  
These parameters are related by an inequality, which is known as the Singleton-like bound. Precisely, if $\cC$ is an $[n,k,d]_{q^m/q}$ code, then 
\begin{equation}\label{singleton-bound}
    mk \leq \min\{m(n - d + 1), n(m - d + 1)\}, 
\end{equation}
see \cite{Delsarte}.

Codes attaining this bound with equality are called {\it maximum rank distance (MRD) codes}, and they are considered to be optimal, due to their largest possible error-correction capability. From the classification of $\F_{q^m}$-linear isometry (see \cite{Berger}), we say that two $[n,k,d]_{q^m/q}$ codes ${\cal C} _1, {\cal C} _2$ are {\it (linearly) equivalent} if there exist $A\in \mathrm{GL}(n,q)$ and $a\in\F_{q^m}^*$ such that \begin{equation*}
{\cal C} _2=a{\cal C} _1\cdot A=\{avA \,:\, v \in {\cal C}_1\}.
\end{equation*}

The geometric counterpart of non-degenerate rank-metric codes (that is,
the columns of any generator matrix of C are $\F_q$-linearly independent) are the
$q$-systems. Let $U$ be an $\F_q$-subspace of $\F_{q^m}^k$ and let $H$ be an $\F_{q^m}$-subspace of $\F_{q^m}^k$. The \textit{weight} of $H$ in $U$ is $\mathrm{wt}_{U}(H)=\dim_{\F_q}(H\cap U)$. Assume now that $U$ has dimension $n$ over $\F_q$. 

We say that $U$ is an $[n,k,d]_{q^m/q}$ \textit{system} if $\langle U \rangle_{\F_{q^m}}=\F_{q^m}^k$ and 
\begin{equation*}
d=\,n-\mathrm{max}\{\mathrm{wt}_{U}(H) \,:\, H \subseteq \F_{q^m}^k \textnormal{ with } \dim_{\F_{q^m}}(H)=k-1 \}.
\end{equation*}

More generally, for each $1 \leq \rho \leq k-1$, the parameters

\begin{equation*}
d_{\rho}=\,n-\mathrm{max}\{\mathrm{wt}_{U}(H) \,:\, H \subseteq \F_{q^m}^k \textnormal{ with } \dim_{\F_{q^m}}(H)=k-\rho \},
\end{equation*}
are known as the {\it $\rho$-generalized rank weight} of the system $U$; see \cite[Definition 4]{Randrianarisoa2020geometric}.

 As before, if the parameter $d$ is not relevant, we will write that $U$ is an $[n,k]_{q^m/q}$ system. Furthermore, when none of the parameters is relevant, we will generically refer to $U$ as to a $q$-system.

 Two $[n,k,d]_{q^m/q}$ systems $U_1,U_2$ are (linearly) equivalent if there exists $A\in \mathrm{GL}(k,q^m)$ such that 
 \begin{equation*}
 U_1\cdot A:=\{uA \,:\, u \in U_1\}=U_2.
 \end{equation*}

In \cite{Randrianarisoa2020geometric}, a one-to-one correspondence between the equivalence class of non-degenerate codes  and the equivalence class of the $q$-systems, is established. Moreover, under this correspondence that associates an $[n,k,d]_{q^m/q}$ code $\mathcal{C}$ to an $[n,k,d]_{q^m/q}$ system $U$, codewords of $\mathcal{C}$ of rank weight $w$ correspond to $\mathbb{F}_{q^m}$-hyperplanes $H$ of $\mathbb{F}_{q^m}^k$ with $\mathrm{wt}_U(H)=n-w$. Also, if $\mathcal{C}$ is the $[n,k,d]_{q^m/q}$ code associated to $U$, the $\rho$-generalized weight of $\mathcal{C}$ is simply defined as the $\rho$-generalized weight of the associated system $U$. It is easy to see that the first generalized weight $d_1$ of $\mathcal{C}$, coincides with its minimum distance $d$.

\begin{comment}
    
In \cite{LiaLongobardiMarinoTrombetti} the $\rho$-generalized weight of an $\F_q$-subspace $V\subset \mathbb{F}_{q^m}^k$ of rank $n$ over $\mathbb{F}_q$ is defined as
\begin{equation*}
    d_\rho(V):=n-\max\{\dim_{\F_q}(V\cap H): H\subset \mathbb{F}_{q^m}^k \text{ with } \dim_{\F_{q^m}}(H)=k-\rho \}.
\end{equation*}
From \cite{martinez2016similarities} we have the following bound
\begin{equation*}
     d_\rho(V)\leq\min\{n-k+\rho,\rho m,\frac{m}{n}(n-k)+m(\rho-1)+1\}.
\end{equation*}

\end{comment}

We have the following definitions; see \cite{MarinoNeriTrombetti}.

\begin{definition}\label{defin}
     Let $\rho$ be a positive integer.  An $[n,k]_{q^m/q}$ code $\mathcal{C}$ is  $\rho$-MRD if 
 $d_{\rho}(\mathcal{C})=n-k+\rho$.
\end{definition}

Notice that if an $[n,k]_{q^m/q}$ code $\mathcal{C}$ is $\rho$-MRD for some $\rho$, then it is also $\rho'$-MRD for every $\rho\leq \rho'\leq k$.

From \cite[Remark 2.12]{MarinoNeriTrombetti}, a $1$-MRD code is also an MRD code, whereas an MRD code is not necessarily $1$-MRD. Also, for $\rho\geq 2$, there is very little known about $\rho$-MRD codes which are not $1$-MRD.

\begin{definition}\label{defin1}
 An $[n,k]_{q^m/q}$ code $\mathcal{C}$  is called \textit{near MRD} if  $d(\mathcal{C})=n-k$ and $d_{\rho}(\mathcal{C})=n-k+\rho$ for every $2\leq \rho\leq k$.
\end{definition}

\begin{comment}
     \begin{definition}
         A RD code $\mathcal{C}$ associated to $V$ is said to be a $\rho$-MRD code if $d_\rho(V)=n-k+\rho$.
     \end{definition}
     Note that if $\mathcal{C}$ is $\rho$-MRD then is also $\rho'$-MRD for every $\rho\leq\rho'\leq k$.
It's easy to see that a $1$-MRD code is also an MRD code, but the opposite is not true. Therefore, examples of MRD codes that are 2-MRD codes but not 1-MRD codes are of particular interest. 

\end{comment}

\begin{prop}
    Let $\mathcal{C}$ be an $[8,4,4]_{q^6/q}$ MRD code associated with $U_s$ as in \eqref{formU}. Then $d_2=6$.
\end{prop}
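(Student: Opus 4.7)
The plan is to sandwich $d_2(\mathcal{C})$ between the $2$-scattered property of $U_s$ established in the Main Theorem and the generalized Singleton-type bound $d_\rho\le n-k+\rho$. Recalling the geometric formula for the generalized rank weights given in the previous discussion,
\[
d_2(\mathcal{C}) \;=\; n \;-\; \max\bigl\{\dim_{\mathbb{F}_q}(H\cap U_s) \;:\; H\subseteq \mathbb{F}_{q^6}^4,\ \dim_{\mathbb{F}_{q^6}}H=k-2\bigr\},
\]
and specialising to the parameters $n=8$, $k=4$, the only quantity I need to control is the maximum $\mathbb{F}_q$-dimension of $H\cap U_s$ as $H$ ranges over the $2$-dimensional $\mathbb{F}_{q^6}$-subspaces of $\mathbb{F}_{q^6}^4$.

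For the lower bound on $d_2$ I would invoke the Main Theorem: since $U_s$ is $2$-scattered, by the very definition of $h$-scattered, every $2$-dimensional $\mathbb{F}_{q^6}$-subspace $H$ of $\mathbb{F}_{q^6}^4$ meets $U_s$ in an $\mathbb{F}_q$-subspace of dimension at most $2$. Consequently $\mathrm{wt}_{U_s}(H)\le 2$ for all such $H$, and therefore
\[
d_2(\mathcal{C})\;\ge\; 8-2\;=\;6.
\]
This is the only nontrivial step, but all of its difficulty is already absorbed into the proof of the Main Theorem; here it is merely translated into the language of $q$-systems and rank-metric codes.

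For the matching upper bound I would appeal to the standard generalized Singleton-type inequality $d_\rho(\mathcal{C})\le n-k+\rho$ recorded in Definition \ref{defin} (and valid for any non-degenerate $[n,k]_{q^m/q}$ code). For $\rho=2$ this yields $d_2(\mathcal{C})\le 8-4+2=6$. Combining the two bounds gives $d_2(\mathcal{C})=6$, which is exactly the claim. As a byproduct, since $\mathcal{C}$ is already $1$-MRD (it attains the ordinary Singleton bound, $d_1=n-k+1=4$... being MRD with $d=n-k$ is made compatible with the preceding proposition by the parameters) and now also satisfies $d_2=n-k+\rho$ with $\rho=2$, one obtains that $\mathcal{C}$ is $2$-MRD in the sense of Definition \ref{defin}. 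I do not expect any obstacle: the entire geometric content has been built up in the previous sections, and this proposition is essentially a dictionary translation.
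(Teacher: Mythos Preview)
Your core argument is correct and matches the paper's (one-line) proof: the $2$-scattered property of $U_s$ gives $\mathrm{wt}_{U_s}(H)\le 2$ for every $2$-dimensional $\mathbb{F}_{q^6}$-subspace $H$, hence $d_2\ge 6$, and the bound $d_\rho\le n-k+\rho$ (or, equivalently, exhibiting a $2$-dimensional $H$ with $\dim_{\mathbb{F}_q}(H\cap U_s)=2$) yields $d_2\le 6$.

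However, your final ``byproduct'' paragraph is mistaken and should be removed. You write $d_1=n-k+1=4$, but $8-4+1=5$; since $d=d_1=4=n-k$, the code $\mathcal{C}$ is \emph{not} $1$-MRD. This is precisely the content of the corollary immediately following the proposition: $\mathcal{C}$ is $2$-MRD but not $1$-MRD (and hence near MRD). Your side remark directly contradicts that corollary, so drop it; the proof of the proposition itself needs nothing beyond the two inequalities you already established.
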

\begin{proof}
    The value of $d_2$ follows from the fact that 
    $U_s$ is $2$-scattered.
\end{proof}

From the previous proposition and Definitions \ref{defin} and \ref{defin1} we get the following result.

\begin{cor}
    Let $\mathcal{C}$ be an $[8,4,4]_{q^6/q}$ MRD associated with $U_s$ as in \eqref{formU}. Then
$\mathcal{C}$ is a $2$-MRD code but not a $1$-MRD code.    
\end{cor}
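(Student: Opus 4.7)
The plan is to apply the geometric description of the generalized rank weights recalled just above the proposition. By definition,
\[
d_2(\mathcal{C}) = n - \max\bigl\{\mathrm{wt}_{U_s}(H) : H \subseteq \F_{q^6}^4,\ \dim_{\F_{q^6}}(H) = k-2 = 2\bigr\},
\]
with $n = 8$ and $k = 4$. So the whole task is to compute the maximum weight that a $2$-dimensional $\F_{q^6}$-subspace of $\F_{q^6}^4$ can have in $U_s$.

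First I would establish the upper bound. Since the Main Theorem says $U_s$ is a maximum $2$-scattered $\F_q$-subspace of $V(4,q^6)$, the defining property of $2$-scatteredness is exactly that every $2$-dimensional $\F_{q^6}$-subspace $H$ of $\F_{q^6}^4$ satisfies
\[
\dim_{\F_q}(U_s \cap H) \le 2.
\]
This immediately gives $d_2(\mathcal{C}) \ge 8 - 2 = 6$.

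For the matching upper bound $d_2(\mathcal{C}) \le 6$, the cleanest route is to invoke the Singleton-like bound \eqref{singleton-bound} applied to the generalized weights, which gives $d_\rho \le n-k+\rho$; for $\rho=2$ this reads $d_2 \le 8-4+2 = 6$. Alternatively, one can exhibit a $2$-dimensional $\F_{q^6}$-subspace $H$ of weight exactly $2$: pick any two vectors $u_1, u_2 \in U_s$ that are $\F_{q^6}$-linearly independent (this is possible because $\langle U_s\rangle_{\F_{q^6}} = \F_{q^6}^4$ by the $2$-scattered hypothesis), then $H := \langle u_1, u_2\rangle_{\F_{q^6}}$ contains the $2$-dimensional $\F_q$-subspace $\langle u_1, u_2\rangle_{\F_q}$, so $\mathrm{wt}_{U_s}(H) \ge 2$, and combined with the $2$-scattered bound we get equality.

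Combining the two bounds yields $d_2(\mathcal{C}) = 6$. There is no real obstacle here: once the dictionary between $q$-systems and the generalized weights of the associated code (as recorded from \cite{Randrianarisoa2020geometric}) is in place, the statement is essentially a translation of the definition of $2$-scattered, together with a one-line invocation of the Singleton bound for the matching inequality.
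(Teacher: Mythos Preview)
Your argument correctly establishes $d_2(\mathcal{C}) = 6 = n-k+2$, and hence that $\mathcal{C}$ is a $2$-MRD code; this matches the paper's reasoning, which likewise derives $d_2 = 6$ directly from the $2$-scattered property of $U_s$ and then invokes Definition~\ref{defin}.

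However, the Corollary has a second clause---that $\mathcal{C}$ is \emph{not} a $1$-MRD code---and your proposal does not address it. You must also check that $d_1(\mathcal{C}) \neq n-k+1 = 5$. This is immediate once you recall (as stated just before Definition~\ref{defin}) that the first generalized weight $d_1$ coincides with the minimum distance $d$; since $\mathcal{C}$ is given as an $[8,4,4]_{q^6/q}$ code, we have $d_1 = 4 \neq 5$, so $\mathcal{C}$ fails to be $1$-MRD. Without this line the proof of the Corollary is incomplete.
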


\begin{rem}
    \rm{
The code corresponding to $U_s$ as described in \eqref{formU} resolves the question raised in \cite[Section 5.2]{MarinoNeriTrombetti} regarding the search for an $[8,4,4]_{q^6/q}$ near Maximum Rank Distance (MRD) code.
   }
\end{rem}

\begin{rem}
    \rm{
Since there exist maximum $2$-scattered $\F_q$-subspaces in an $\F_{q^6}$-space of dimension $3$ and $4$, by \cite[Theorem 2.5]{CsMPZ2019}, there exist maximum $2$-scattered $\F_q$-subspaces in $V(r,q^6)$, with $r\geq 3$ and $r\ne 5$, $q=2^h$, with $h$ odd. Also, if $\mathcal{C}$ is the associated $[2r,r,4]_{q^6/q}$ MRD code then $d_{r-2}(\mathcal{C})=2r-2$, i.e. it is an $(r-2)$-MRD but not a $1$-MRD.}
\end{rem}

\begin{comment}
    
We can show that the codes associated to $U$ have such property.
 From the main Theorem, we have that $d_2(U)=8-2=6$. We also have that $n-k+2=8-4+2=6$, so the codes associated to $U$ are $2$-MRD codes.
 For $\rho=1$ we have that $n-k+\rho=5$, but $d_1(U)\leq\min\{5,6,4\}$, so the codes associated to $U$ are not $1$-MRD codes.

\end{comment}

\section*{Acknowledgements}
The authors thank the Italian National Group for Algebraic and Geometric Structures and their Applications (GNSAGA—INdAM)
which supported the research. 

\section*{Declarations}
{\bf Conflicts of interest.} The authors have no conflicts of interest to declare that are relevant to the content of this
article.

\end{document}